\newtheorem{theorem}{Theorem}[section]
\newtheorem{definition}[theorem]{Definition}
\newtheorem{proposition}[theorem]{Proposition}
\newtheorem{lemma}[theorem]{Lemma}
\newtheorem{example}[theorem]{Example}
\newtheorem{corollary}[theorem]{Corollary}
\numberwithin{equation}{section}
\newcommand*\linenomathpatch[1]{%
  \cspreto{#1}{\linenomath}%
  \cspreto{#1*}{\linenomath}%
  \csappto{end#1}{\endlinenomath}%
  \csappto{end#1*}{\endlinenomath}%
}
\newcommand*\linenomathpatchAMS[1]{%
  \cspreto{#1}{\linenomathAMS}%
  \cspreto{#1*}{\linenomathAMS}%
  \csappto{end#1}{\endlinenomath}%
  \csappto{end#1*}{\endlinenomath}%
}
  \let\linenomathAMS\linenomathWithnumbers
  \patchcmd\linenomathAMS{\advance\postdisplaypenalty\linenopenalty}{}{}{}
  \let\linenomathAMS\linenomathNonumbers
\patchcmd{\mmeasure@}{\measuring@true}{
  \measuring@true
  \ifnum-\linenopenaltypar>\interdisplaylinepenalty
    \advance\interdisplaylinepenalty-\linenopenalty
  \fi
  }{}{}
\begin{document}

\title[On stability of  restricted center properties and continuity of restricted center map$\ldots$]{On stability of  restricted center properties and continuity of restricted center map under $\ell_p$-direct sum}

\author{P. Gayathri}
\address{Department of Mathematical and Computational Sciences,\\ National Institute of Technology Karnataka, Surathkal, Mangaluru - 575 025, India}
\email[corresponding author]{pgayathri@nitk.edu.in}

\author{V. Thota}
\address{Department of Mathematics, National Institute of Technology Tiruchirappalli, \\ Tiruchirappalli, Tamil Nadu - 620 015, India}
\email{vamsinadh@nitt.edu}

\keywords{Restricted center property; property-$(P_1)$; property-$(lP_2)$; property-$(P_2)$; restricted center map; product space.}
\subjclass{Primary: 41A65; Secondary: 46B20}
\date{}

\begin{abstract}
	 We study the stability of various restricted center properties and certain continuity properties of the restricted center map. We observe that restricted center property, property-$(P_1)$ and semi-continuity properties of the restricted center map are preserved under $\ell_p$-direct sum $(1 \leq p <  \infty).$ It is shown that property-$(P_2)$ is stable under finite  $\ell_p$-direct sum, but not under infinite $\ell_p$-direct sum. Additionally, we introduce a notion called property-$(lP_2)$ as a sufficient condition for the continuity of the restricted center map. Further, the stability of property-$(lP_2)$ is established.
\end{abstract}
\maketitle

\section{Introduction}\label{sec:intr}
\enlargethispage{-1\baselineskip}
Let $X$ be a real Banach space. For any non-empty subset $M$ of $X,$ we denote the set of all non-empty closed subsets of $M$, non-empty closed bounded subsets of $M$, non-empty closed convex subsets of $M$ and  non-empty compact subsets of $M$ respectively by $CL(M)$, $CB(M)$, $CC(M)$ and $K(M).$ The closed unit ball and unit sphere of $X$ are denoted by $B_X$ and $S_X,$ respectively. 

For any $x\in X$ and $F \in CB(X),$ we define $r(x, F)=\sup\{\|x-a\| :  a \in F\}.$ The restricted radius of $F$ relative to $V\in CL(X),$ denoted by $rad_V(F),$ is defined as $rad_V(F)=\inf\{r(v, F): v \in V \}.$ The set $Z_V(F)$ of all points from $V$ at which the infimum in $rad_V(F)$ attain is called restricted centers of $F$ relative to $V,$ i.e., $Z_V(F)=\{v \in V : r(v, F)=rad_V(F)\}.$ 

We say that the pair $(V, F)$ has restricted center property (in short, r.c.p.) if $Z_V(F)$ is non-empty. Further, we say that the pair $(V, \mathcal{F})$ has r.c.p. if $(V,G)$ has r.c.p. for every $G \in \mathcal{F}.$ A sequence $(v_n)$ in $V$ is said to be a minimizing sequence for $F$ if $r(v_n, F) \to rad_V(F).$ For any $\mathcal{F} \subseteq CB(X),$ the map $Z_V$ from $(\mathcal{F}, \mathcal{H})$ to $(CB(V), \mathcal{H})$, defined as $F \mapsto Z_V(F)$, is said to be the restricted center map. Here, $\mathcal{H}$ is the Hausdorff metric on $CB(X)$ given by $\mathcal{H}(A,B)= \inf\{r>0: A \subseteq B+ rB_X \ \mbox{and} \ B \subseteq A+ rB_X \}$
for any $A, B \in  CB(X)$.

To study the continuity properties of the restricted center map, Mach \cite{Mach1980} introduced two notions called property-$(P_1)$ and property-$(P_2).$ To see some geometric aspects of these notions recently Lalithambigai et al. \cite{LPST2017} equivalently reformulated these notions as follows. For any $\delta \geq 0,$  we denote  $Z_V(F, \delta)=\{v \in V: r(v,F)\leq  rad_V(F)+ \delta\}.$ Clearly $Z_V(F,0)= Z_V(F).$ 

\begin{definition}
Let $V \in CL(X)$, $F\in CB(X)$ and $\mathcal{F} \subseteq CB(X).$
\begin{enumerate}

\item We say that the pair $(V, F)$ has property-$(P_1)$ if $(V, F)$ has r.c.p. and for every  $\epsilon>0$ there exists $\delta>0$ such that $Z_V(F, \delta) \subseteq Z_V(F)+\epsilon B_X.$ We say that the pair $(V, \mathcal{F})$ has property-$(P_1)$ if $(V, G)$ has property-$(P_1)$ for every $G \in \mathcal{F}.$ 

\item We say that the pair $(V, \mathcal{F})$ has property-$(P_2)$ if $(V, \mathcal{F})$ has r.c.p. and for every  $\epsilon>0$ there exists $\delta>0$ such that $Z_V(G, \delta) \subseteq Z_V(G)+\epsilon B_X$ for every  $G \in \mathcal{F}.$

\end{enumerate}
\end{definition}

In \cite[Theorem 5]{Mach1980}, Mach observed that  if $(V,\mathcal{F})$ has property-$(P_1)$, then the restricted center map $Z_V$ is upper Hausdorff semi-continuous on $(\mathcal{F},\mathcal{H}).$ Also, \cite[Theorem 6]{Mach1980}, it is shown that the restricted center map $Z_V$ is Hausdorff continuous on $(\mathcal{F}, \mathcal{H})$, whenever $(V,\mathcal{F})$ has property-$(P_2).$ However, the later result was recently improved, by Lalithambigai et al. \cite[Theorem 3.1]{LPST2017}, by showing that  property-$(P_2)$ leads to the uniform Hausdorff continuity of the restricted center map. It is easy to check that the converses of both results are not necessarily true in general, for instance see \cite[Example 3.2]{LPST2017}. At this stage, it is relevant to investigate the possibility of defining an intermediate notion that serves as a sufficient condition for the continuity of the restricted center map. To address this, in this article, we introduce a notion called property-$(lP_2)$. 

The theory of best approximation in Banach spaces provides a straightforward approach to explore these restricted center properties through their counterparts. Let  $V \in CL(X)$. If $F=\{x\}$ for some $x \in X$, then $rad_V(F)$ is the distance from $x$ to $V,$ denoted by $d(x,V)$, and $Z_V(F)$ is the set of all best approximants to $x$ from $V,$ i.e., $P_V(x)=\{v \in V: \|x-v\|=d(x,V)\}.$ Let $A$ be any non-empty subset of $X$ and $\mathcal{F}$ be the collection of all singleton subsets of $A.$ We say that the set $V$ is proximinal (respectively, strongly proximinal, uniformly strongly proximinal) on $A$ if $(V, \mathcal{F})$ has r.c.p. (respectively, property-$(P_1)$, property-$(P_2)$). Further, we say that the set $V$ is approximatively compact on $A$ if it is strongly proximinal on $A$ and $P_V(x)$ is compact for every $x \in A.$ The map $P_V$ from $A$ to $(CB(V), \mathcal{H})$ defined as $x \mapsto P_V(x)$, is called the metric projection. These proximinality notions are closely associated with some geometric properties and provide various continuity properties of the metric projection. We refer to \cite{DuST2017,InPr2017a} for more results on these notions.

 Let $1 \leq p < \infty.$ In \cite{LaNa2006}, Lalithambigai and Narayana observed the stability of the strong proximinality under finite $\ell_p$-direct sum. Further, in the same article, it is proved that the lower Hausdorff semi-continuity of the metric projection is stable under finite $\ell_{p}$-direct sum. The stability of approximative compactness under finite $\ell_{p}$-direct sum is proved by Bandyopadhyay et al. \cite{BLLN2008}. This result was later extended to infinite collection by Luo et al. in \cite{LuSZ2016}.

It is interesting to study whether analogous stability results hold for restricted center properties and for the continuity properties of restricted center map. Such a question is not new and has already been raised in the literature (see, \cite[Remark 12]{Rao2016}). In this paper, we present some affirmative answers to this question. We obtain some stability results of restricted center property, property-$(P_1)$, property-$(P_2)$ and the continuity properties of the restricted center map.

The article is organized as follows. The stability of the restricted center property under $\ell_p$-direct sum $(1\leq p <\infty)$ is shown in \Cref{sec:prel}. By obtaining some equivalent formulations of property-$(P_2)$, we demonstrate that the notions quasi uniform rotundity with respect to a subspace coincide with the property-$(P_2)$, which further equivalent to the  uniform Hausdorff continuity of the restricted center map. 

\Cref{sec:3} deals with the stability of properties-$(P_1)$ and $(P_2)$ under $\ell_p$-direct sum $(1 \leq p < \infty)$. We observe that property-$(P_1)$ is stable under infinite $\ell_p$-direct sum, but not property-$(P_2)$. However, we prove that property-$(P_2)$ is stable under finite $\ell_p$-direct sum. As a sufficient condition for the continuity of restricted center map, we introduce a notion called property-$(lP_2),$ which is intermediate between properties-$(P_1)$ and $(P_2).$ We notice that in spaces which are uniformly rotund in every direction property-$(P_1)$ and property-$(lP_2)$ coincide for closed convex sets. Also, we prove that in finite dimensional spaces the notions property-$(lP_2)$ and property-$(P_2)$ coincide for subspaces. Additionally, we present the stability of property-$(lP_2)$ by obtaining the stability of continuity properties of the restricted center map. 

\section{Preliminaries}\label{sec:prel}

In this section, we obtain the stability of restricted center property and relate property-$(P_2)$ with some geometric notions. These results are essential to prove our main results in the next section. Throughout the paper, we assume all subspaces to be closed. 
 
Let $I$ be any index set and  $\{X_i : {i\in I}\}$ be a collection of Banach spaces. For $1 \leq p < \infty,$ consider $X=(\oplus_pX_i)_{i \in I}$ and denote \[\mathcal{P}(X)=\{F=\Pi_{i \in I } F_i \in CB(X): F_i \in CB(X_i), i \in I\}.\]

The following restricted radius formula is essential to prove the stability of the restricted center property.

\begin{proposition}\label{prop:RadiusEquality_p}
Let $\{X_i: {i\in \mathbb{N}}\}$ be a collection of Banach spaces, $Y_i$ be a  subspace of $X_i$ for every $i\in \mathbb{N}$ and $1\leq p < \infty.$ Let $X=(\oplus_pX_i)_{i \in \mathbb{N}},$ $Y=(\oplus_pY_i)_{i \in \mathbb{N}}$ and $F=\Pi_{i \in \mathbb{N}}F_i \in \mathcal{P}(X).$ Then for any $x=(x_i)_{i \in \mathbb{N}} \in X,$ we have 
\begin{enumerate}
  \item $r(x,F) = \left(\sum_{i\in\mathbb{N}} r(x_i, F_i)^p\right)^{\frac{1}{p}}$;
   \item $rad_Y(F) = \left(\sum_{i\in\mathbb{N}} rad_{Y_i}(F_i)^p\right)^{\frac{1}{p}}$.
\end{enumerate}
\end{proposition}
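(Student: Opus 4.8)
The plan is to reduce both identities to the elementary fact that, for nonnegative functions, the supremum (respectively infimum) of a sum over a product set decouples coordinatewise; the only genuine issue will be that the approximating points must remain $\ell_p$-summable so as to lie in $X$ and $Y$. I would treat (1) first, since (2) follows from it by a second application of the same decoupling idea. Throughout, recall that the norm on $X$ is $\|x\|=(\sum_i\|x_i\|^p)^{1/p}$ and that $F=\prod_i F_i$ consists of all $a=(a_i)$ with $a_i\in F_i$ and $\sum_i\|a_i\|^p<\infty$; note also that $r(x,F)$ is finite because $F$ is bounded.

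For (1), the inequality $r(x,F)\le(\sum_i r(x_i,F_i)^p)^{1/p}$ is immediate: for every $a\in F$ one has $\|x-a\|^p=\sum_i\|x_i-a_i\|^p\le\sum_i r(x_i,F_i)^p$, and taking the supremum over $a$ gives the bound. For the reverse inequality, fix $\epsilon>0$, a reference point $a^0=(a^0_i)\in F$, and a summable sequence $(\eta_i)$ with $\sum_i\eta_i<\epsilon$. Given $N$, choose for each $i\le N$ a point $a_i\in F_i$ with $\|x_i-a_i\|^p>r(x_i,F_i)^p-\eta_i$, and set $a_i=a^0_i$ for $i>N$. Then $a\in F$, since its tail agrees with $a^0$ and is therefore $\ell_p$-summable, and $r(x,F)^p\ge\|x-a\|^p\ge\sum_{i\le N}r(x_i,F_i)^p-\epsilon$. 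Letting $N\to\infty$ and then $\epsilon\to 0$ yields $r(x,F)^p\ge\sum_i r(x_i,F_i)^p$, completing (1).

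For (2), part (1) gives $rad_Y(F)=\inf_{y\in Y}(\sum_i r(y_i,F_i)^p)^{1/p}$. The inequality $rad_Y(F)^p\ge\sum_i rad_{Y_i}(F_i)^p$ is again immediate, since $r(y_i,F_i)\ge rad_{Y_i}(F_i)$ coordinatewise; in particular the series on the right converges because $rad_Y(F)$ is finite. The reverse inequality is where the main obstacle lies: for each $i$ one can pick $v_i\in Y_i$ with $r(v_i,F_i)\le rad_{Y_i}(F_i)+\epsilon_i$, but it must be guaranteed that $v=(v_i)$ actually belongs to $Y$, i.e.\ that $\sum_i\|v_i\|^p<\infty$. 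I would resolve this by controlling the norm of the near-minimizers: since $0\in Y_i$ and $F_i\ne\emptyset$, for any $a\in F_i$ one has $\|v_i\|\le\|v_i-a\|+\|a\|\le r(v_i,F_i)+r(0,F_i)$, whence $\|v_i\|\le rad_{Y_i}(F_i)+\epsilon_i+r(0,F_i)$.

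Choosing $(\epsilon_i)$ with $(\sum_i\epsilon_i^p)^{1/p}<\epsilon$ and using $\sum_i r(0,F_i)^p=r(0,F)^p<\infty$ together with Minkowski's inequality in $\ell_p$, the bound above shows $\sum_i\|v_i\|^p<\infty$, so $v\in Y$. A second application of Minkowski gives $r(v,F)=(\sum_i r(v_i,F_i)^p)^{1/p}\le(\sum_i rad_{Y_i}(F_i)^p)^{1/p}+\epsilon$, hence $rad_Y(F)\le(\sum_i rad_{Y_i}(F_i)^p)^{1/p}+\epsilon$, and letting $\epsilon\to 0$ finishes (2). The only real subtlety throughout is the summability of the approximating sequences; both times it is secured by anchoring the construction so that the resulting points genuinely lie in the $\ell_p$-sum: at the fixed tail $a^0$ for (1), and through the linear bound using $0\in Y_i$ for (2).
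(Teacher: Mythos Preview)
Your proof is correct and follows the same decoupling strategy as the paper's. The only noteworthy difference is in how you guarantee that the approximating points lie in the $\ell_p$-sum. In part~(1) the paper simply observes that any selection $(z_{n,i})_i$ with $z_{n,i}\in F_i$ automatically lies in $F\subset X$ (this is implicit in the hypothesis $F=\Pi_i F_i\in CB(X)$), so your tail-anchoring at $a^0$, while correct, is not needed. In part~(2) the paper handles summability by zero-padding: it uses the truncated competitor $z_n=(y_{n,1},\ldots,y_{n,m},0,0,\ldots)\in Y$ and lets the tail contribution $\sum_{i>m}r(0,F_i)^p$ vanish as $m\to\infty$. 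You instead build a full near-minimizer $(v_i)$ at once and verify $\ell_p$-summability via the pointwise bound $\|v_i\|\le rad_{Y_i}(F_i)+\epsilon_i+r(0,F_i)$ together with Minkowski's inequality. Both arguments are standard; the paper's truncation is slightly shorter, while your approach makes the summability issue (which you rightly identify as the only real subtlety) fully explicit.
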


\begin{proof}
$(1)$: Let $x=(x_i)_{i \in \mathbb{N}} \in X$. For every $i \in \mathbb{N},$ let $(z_{n,i})_{n \in \mathbb{N}}$ be a sequence in $F_i$ such that $\|x_i-z_{n,i}\| \to r(x_i, F_i).$ For every $n \in \mathbb{N},$ consider $z_n=(z_{n,i})_{i \in \mathbb{N}} \in F.$ Let  $m \in \mathbb{N}$. Since for any $n \in \mathbb{N}$
\[
\sum\limits_{i =1}^m \|x_i-z_{n,i}\|^p \leq \sum\limits_{i \in \mathbb{N}} \|x_i-z_{n,i}\|^p = \|x-z_n\|^p \leq  r(x, F)^p,
\]
we have $\sum_{i =1}^m r(x_i, F_i)^p \leq  r(x, F)^p.$ Thus, $\sum_{i \in \mathbb{N}} r(x_i, F_i)^p \leq  r(x, F)^p.$ Observe that for any $z=(z_i)_{i \in \mathbb{N}} \in F$, we have
\[ 
\|x-z\|^p = \sum_{i\in\mathbb{N}}\|x_i-z_i\|^p \leq \sum_{i\in\mathbb{N}} r(x_i, F_i)^p.
\]
Therefore, $r(x,F)^p \leq \sum_{i\in\mathbb{N}} r(x_i, F_i)^p.$ Hence the equality holds. \\
$(2)$: For any $y=(y_i)_{i \in \mathbb{N}}\in Y,$ by $(1),$ it follows that
\[ 
\sum\limits_{i \in \mathbb{N}} rad_{Y_i}(F_i)^p \leq \sum\limits_{i \in \mathbb{N}} r(y_i, F_i)^p = r(y, F)^p.
\]
Thus, $ \sum_{i \in \mathbb{N}} rad_{Y_i}(F_i)^p \leq rad_Y(F)^p.$ Now to prove the other inequality, for any $i \in \mathbb{N},$ choose a sequence $(y_{n,i})_{n \in \mathbb{N}}$  in $Y_i$ such that $r(y_{n,i}, F_i) \to rad_{Y_i}(F_i).$ Let $m \in \mathbb{N}$. For every $n \in \mathbb{N},$ define $z_n = (y_{n,1}, y_{n,2},\ldots,y_{n,m},0,0,\ldots) \in Y.$ Since, by $(1),$
\[
rad_Y(F)^p \leq r(z_n, F)^p = \sum\limits_{i =1}^m r(y_{n,i}, F_i)^p + \sum\limits_{i > m} r(0, F_i)^p
\]
holds for any $n \in \mathbb{N},$ we have $rad_Y(F)^p \leq  \sum_{i =1}^m rad_{Y_i}(F_i)^p + \sum_{i > m} r(0, F_i)^p .$ Thus, $rad_Y(F)^p \leq \sum_{i \in \mathbb{N}} rad_{Y_i}(F_i)^p.$ Hence the equality holds. 
\end{proof}

\begin{theorem}\label{prop: rcp l_p}
Let $\{X_i : {i\in \mathbb{N}}\}$ be a collection of Banach spaces, $Y_i$ be a  subspace of $X_i$ for every $i\in \mathbb{N}$ and $1\leq p < \infty.$ Let $X=(\oplus_pX_i)_{i \in \mathbb{N}}$, $Y=(\oplus_pY_i)_{i \in \mathbb{N}}$ and $F= \Pi_{i \in \mathbb{N}}F_i \in \mathcal{P}(X).$  Then  $(Y_i, F_i)$ has r.c.p. for every $i \in \mathbb{N}$ if and only if $(Y, F)$ has r.c.p.  Further, we have $Z_Y(F)=(\oplus_pZ_{Y_i}(F_i))_{i \in \mathbb{N}}= \Pi_{i \in \mathbb{N}}Z_{Y_i}(F_i).$
\end{theorem}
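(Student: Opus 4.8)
The plan is to reduce everything to the termwise radius identities in \Cref{prop:RadiusEquality_p} and then exploit the strict monotonicity of $t\mapsto t^p$ on $[0,\infty)$ to pass from an equality of sums to the equality of the individual summands. First I would fix $y=(y_i)_{i\in\mathbb{N}}\in Y$ and use part $(1)$ of the proposition to write $r(y,F)^p=\sum_{i\in\mathbb{N}} r(y_i,F_i)^p$ and part $(2)$ to write $rad_Y(F)^p=\sum_{i\in\mathbb{N}} rad_{Y_i}(F_i)^p$. Since $0\in Y_i$, each coordinate obeys $r(y_i,F_i)\geq rad_{Y_i}(F_i)$, so the two nonnegative series compare termwise. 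Consequently $r(y,F)=rad_Y(F)$ forces equality in every term, i.e.\ $r(y_i,F_i)=rad_{Y_i}(F_i)$ for all $i$; conversely, such termwise equalities give $r(y,F)=rad_Y(F)$. This already identifies $Z_Y(F)$ with the set of $(y_i)\in Y$ satisfying $y_i\in Z_{Y_i}(F_i)$ for every $i$, which is precisely $(\oplus_p Z_{Y_i}(F_i))_{i\in\mathbb{N}}$.

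From this description the equivalence of the restricted center property follows in both directions. If each $Z_{Y_i}(F_i)$ is non-empty, I would select a center $y_i$ in each coordinate, assemble $y=(y_i)_{i\in\mathbb{N}}$, check that $y\in Y$ (see below), and conclude $y\in Z_Y(F)$, so $(Y,F)$ has r.c.p. Conversely, a single $y=(y_i)\in Z_Y(F)$ projects in each coordinate to a point $y_i\in Z_{Y_i}(F_i)$, so every $(Y_i,F_i)$ has r.c.p.

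The one point needing care—and the only place the argument is not purely formal—is the final identification $(\oplus_p Z_{Y_i}(F_i))_{i\in\mathbb{N}}=\Pi_{i\in\mathbb{N}}Z_{Y_i}(F_i)$, that is, that an arbitrary coordinatewise selection of centers automatically lies in $Y$. Here I would use that $F\in CB(X)$ is bounded, so by part $(1)$ applied at the origin $r(0,F)^p=\sum_{i\in\mathbb{N}} r(0,F_i)^p<\infty$. For $y_i\in Z_{Y_i}(F_i)$ one has $r(y_i,F_i)=rad_{Y_i}(F_i)\leq r(0,F_i)$, and for any $a\in F_i$, $\|y_i\|\leq\|y_i-a\|+\|a\|\leq r(y_i,F_i)+\|a\|$. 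Taking the infimum over $a\in F_i$ and using $d(0,F_i)=\inf_{a\in F_i}\|a\|\leq r(0,F_i)$ yields $\|y_i\|\leq 2\,r(0,F_i)$. Hence $\sum_{i\in\mathbb{N}}\|y_i\|^p\leq 2^p\sum_{i\in\mathbb{N}} r(0,F_i)^p<\infty$, so $(y_i)\in Y$ and the product coincides with the $\ell_p$-sum.

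I expect no serious obstacle: the strict increase of $t\mapsto t^p$ does all the conceptual work in separating the coordinates, and the summability estimate $\|y_i\|\leq 2\,r(0,F_i)$ is the only genuinely quantitative step, ensuring that the two descriptions of $Z_Y(F)$ agree.
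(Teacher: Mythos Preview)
Your proposal is correct and follows essentially the same route as the paper: both arguments reduce to the termwise identities of \Cref{prop:RadiusEquality_p}, use the coordinatewise inequality $r(y_i,F_i)\geq rad_{Y_i}(F_i)$ to force equality in each summand, and then verify that an arbitrary choice of coordinate centers is $p$-summable. The only cosmetic difference is in that last summability check: the paper fixes $z=(z_i)\in F$, uses $\|y_i\|\leq rad_{Y_i}(F_i)+\|z_i\|$, and applies Minkowski to bound $\|y\|$ by $rad_Y(F)+\|z\|$, whereas you bound $\|y_i\|\leq 2\,r(0,F_i)$ directly; both estimates are adequate and the underlying idea is the same. (As a tiny aside, the clause ``Since $0\in Y_i$'' before $r(y_i,F_i)\geq rad_{Y_i}(F_i)$ is unnecessary---that inequality holds by definition for any $y_i\in Y_i$.)
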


\begin{proof}
Let $(Y_i, F_i)$ has r.c.p. for every $i \in \mathbb{N}.$ Choose $y_i \in Z_{Y_i}(F_i)$ for every $i \in \mathbb{N}$ and consider $y=(y_i)_{i \in \mathbb{N}}.$ First we prove that $y \in Y.$ For this let $z=(z_i)_{i\in\mathbb{N}} \in F.$ Since for every $i \in \mathbb{N},$ we have $\|y_i-z_i\| \leq r(y_i, F_i)$ which implies $\| y_i\| \leq rad_{Y_i}(F_i)+\|z_i\|.$ Thus, it follows from \Cref{prop:RadiusEquality_p},
\[
\left(\sum_{i \in \mathbb{N}} \|y_i\|^p\right)^\frac{1}{p} \leq  \left(\sum_{i \in \mathbb{N}} rad_{Y_i}(F_i)^p\right)^\frac{1}{p}+ \left(\sum_{i \in \mathbb{N}}\|z_i\|)^p\right)^\frac{1}{p} = rad_Y(F)+\|z\|
\]
and hence $y \in Y.$ Further, by \Cref{prop:RadiusEquality_p}, $y \in Z_Y(F).$ Thus, $(Y,F)$ has r.c.p. From the preceding argument, observe that $\Pi_{i \in \mathbb{N}}Z_{Y_i}(F_i)=(\oplus_p Z_{Y_i}(F_i))_{i \in \mathbb{N}}  \subseteq Z_Y(F).$ To prove the converse, let $(Y, F)$ has r.c.p. and $w=(w_i)_{i \in \mathbb{N}} \in Z_Y(F).$ We claim that $w_i \in Z_{Y_i}(F_i)$ for every $i \in \mathbb{N}.$ Suppose there exists $j \in \mathbb{N}$ such that $w_j \notin Z_{Y_j}(F_j).$ Since $rad_{Y_j}(F_j) < r(w_j, F_j)$  and $rad_{Y_i}(F_i) \leq r(w_i, F_i)$ for every $i \in \mathbb{N},$ by \Cref{prop:RadiusEquality_p}, we have $rad_Y(F) < r(w, F).$ This is a contradiction. Therefore, $w_i \in Z_{Y_i}(F_i)$ for every $i \in \mathbb{N}.$ Hence, $(Y_i, F_i)$ has r.c.p. for every $i \in \mathbb{N},$ further it follows that $Z_Y(F) \subseteq (\oplus_p Z_{Y_i}(F_i))_{i \in \mathbb{N}}.$
\end{proof}

\begin{corollary}\label{cor:rcp_uniq}
Let $\{X_i : {i\in \mathbb{N}}\}$ be a collection of Banach spaces, $Y_i$ be a  subspace of $X_i$ for every $i\in \mathbb{N}$ and $1\leq p < \infty.$ Let $X=(\oplus_pX_i)_{i \in \mathbb{N}}$, $Y=(\oplus_pY_i)_{i \in \mathbb{N}}$ and $F= \Pi_{i \in \mathbb{N}}F_i \in \mathcal{P}(X).$  Then  $Z_Y(F)$ is singleton if and only if $Z_{Y_i}(F_i)$ is singleton for every $i \in \mathbb{N}.$
\end{corollary}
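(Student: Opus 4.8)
The plan is to derive \Cref{cor:rcp_uniq} directly from the structural identity established in \Cref{prop: rcp l_p}, namely $Z_Y(F) = \Pi_{i \in \mathbb{N}} Z_{Y_i}(F_i)$, rather than reworking the radius estimates from scratch. Since the corollary is an ``if and only if'' statement about cardinality, the key observation is that the restricted center set of the product factors exactly as a Cartesian product of the restricted center sets of the coordinate pairs, and a product of sets is a singleton precisely when each factor is a singleton and none is empty.

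First I would invoke \Cref{prop: rcp l_p} to record that $(Y,F)$ has r.c.p. if and only if each $(Y_i,F_i)$ has r.c.p., together with the identity $Z_Y(F) = \Pi_{i \in \mathbb{N}} Z_{Y_i}(F_i)$. For the forward direction, suppose $Z_Y(F)$ is a singleton; then in particular $Z_Y(F)$ is non-empty, so $(Y,F)$ has r.c.p., and by \Cref{prop: rcp l_p} each $Z_{Y_i}(F_i)$ is non-empty. If some $Z_{Y_j}(F_j)$ contained two distinct points $u_j \neq u_j'$, I would construct two distinct elements of the product $\Pi_{i \in \mathbb{N}} Z_{Y_i}(F_i)$ by fixing an arbitrary selection $w_i \in Z_{Y_i}(F_i)$ for $i \neq j$ and letting the $j$-th coordinate be $u_j$ and $u_j'$ respectively; these are two distinct points of $Z_Y(F)$, contradicting that it is a singleton. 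Hence each $Z_{Y_i}(F_i)$ is a singleton.

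For the converse, suppose each $Z_{Y_i}(F_i)$ is a singleton, say $Z_{Y_i}(F_i) = \{w_i\}$. Then each factor is non-empty, so by \Cref{prop: rcp l_p} the pair $(Y,F)$ has r.c.p. and $Z_Y(F) = \Pi_{i \in \mathbb{N}} \{w_i\}$, which consists of the single point $w = (w_i)_{i \in \mathbb{N}}$; the same theorem guarantees $w \in Y$. Thus $Z_Y(F)$ is a singleton, completing the proof.

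I do not anticipate a serious obstacle here, as the corollary is essentially a set-theoretic consequence of the product decomposition already proved. The only point requiring mild care is the construction in the forward direction: I must ensure that the two competing elements genuinely lie in $Y = (\oplus_p Y_i)_{i \in \mathbb{N}}$, but this is automatic since \Cref{prop: rcp l_p} identifies $\Pi_{i \in \mathbb{N}} Z_{Y_i}(F_i)$ with $(\oplus_p Z_{Y_i}(F_i))_{i \in \mathbb{N}} \subseteq Z_Y(F) \subseteq Y$, so any point built from coordinatewise choices in the $Z_{Y_i}(F_i)$ automatically belongs to $Y$.
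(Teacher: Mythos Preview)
Your proposal is correct and follows the same approach the paper intends: the corollary is stated without proof immediately after \Cref{prop: rcp l_p}, so the paper treats it as an immediate consequence of the product identity $Z_Y(F)=\Pi_{i\in\mathbb{N}}Z_{Y_i}(F_i)$, which is precisely what you use. Your careful check that the coordinatewise constructions lie in $Y$ via the equality $\Pi_{i\in\mathbb{N}}Z_{Y_i}(F_i)=(\oplus_p Z_{Y_i}(F_i))_{i\in\mathbb{N}}$ is the only nontrivial point, and you handle it correctly.
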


It is easy to verify that for any subspace $Y$ of $X$, if $(Y, \mathcal{F})$ has r.c.p. (respectively, property-$(P_1)$) where $\mathcal{F}=\{F \in CB(X): rad_Y(F)=1\}$, then $(Y, CB(X))$ has r.c.p. (respectively, property-$(P_1)$). We need this fact and the following equivalences of property-$(P_2)$ to prove our main results in the next section.  

  \begin{proposition}\label{prop:P_2Equi}
    Let $Y$ be a subspace of $X$ with $(Y, CB(X))$ has r.c.p. and $\alpha>0$. Then the following statements are equivalent.
    \begin{enumerate}
        \item $(Y, \mathcal{F}_1)$ has property-$(P_2)$, where $\mathcal{F}_1= \{F \in CB(X): rad_Y(F)= r(0,F)=1\}.$
        \item  $(Y, \mathcal{F}_2)$ has property-$(P_2)$, where $\mathcal{F}_2= \{F \in CB(X): rad_Y(F)\leq \alpha\}.$
        \item  $(Y, \mathcal{F}_3)$ has property-$(P_2)$, where $\mathcal{F}_3= \{F \in CB(X): rad_Y(F)=\alpha\}.$
    \end{enumerate}
\end{proposition}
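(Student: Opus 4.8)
The plan is to run the cycle $(2)\Rightarrow(3)\Rightarrow(1)\Rightarrow(2)$, with the last implication carrying all the weight. Since $(Y,CB(X))$ has r.c.p. by hypothesis and $\mathcal F_1,\mathcal F_2,\mathcal F_3\subseteq CB(X)$, every set involved already possesses a restricted center; hence in each implication only the $\epsilon$--$\delta$ clause of property-$(P_2)$ must be verified. The engine is the behaviour of the data under dilation and translation by elements of $Y$: for $\lambda>0$ and $y_0\in Y$ one checks straight from the definitions, using that $Y$ is a subspace (so $\lambda Y+y_0=Y$), that $rad_Y(\lambda F+y_0)=\lambda\,rad_Y(F)$, $Z_Y(\lambda F+y_0)=\lambda Z_Y(F)+y_0$, and, crucially, $Z_Y(\lambda F+y_0,\delta)=\lambda\,Z_Y(F,\delta/\lambda)+y_0$. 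I would record these three identities at the outset, since they are invoked repeatedly.

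The two easy implications are containments plus one rescaling. For $(2)\Rightarrow(3)$ it suffices that $\mathcal F_3\subseteq\mathcal F_2$. For $(3)\Rightarrow(1)$, dilate by $1/\alpha$: the identities above show that $(Y,\mathcal F_3)$ has property-$(P_2)$ if and only if $(Y,\{F:rad_Y(F)=1\})$ does (the modulus transforms uniformly by $\delta\mapsto\delta/\alpha$, $\epsilon\mapsto\epsilon/\alpha$); since $\mathcal F_1\subseteq\{F:rad_Y(F)=1\}$, restricting the family yields $(1)$.

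The heart is $(1)\Rightarrow(2)$. First I would upgrade $(1)$ to property-$(P_2)$ on the \emph{whole} unit-radius family by a translation: given $F$ with $rad_Y(F)=1$, pick $y_0\in Z_Y(F)$ (possible by r.c.p.); then $0\in Z_Y(F-y_0)$, so $r(0,F-y_0)=rad_Y(F-y_0)=1$ and $F-y_0\in\mathcal F_1$. Applying $(1)$ to $F-y_0$ and translating back by $y_0$ gives the $\epsilon$--$\delta$ estimate for $F$ with the \emph{same} $\delta$. Thus $(1)$ is equivalent to property-$(P_2)$ on $\{F:rad_Y(F)=1\}$, which I shall call $(3')$.

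It remains to deduce $(2)$ from $(3')$, and here lies the genuine obstacle: $\mathcal F_2$ contains sets of arbitrarily small restricted radius, and a naive dilation of a radius-$\beta$ set to radius $1$ multiplies the required $\delta$ by $\beta$, destroying uniformity as $\beta\to0$. The idea that removes this is an a priori bound needing no regularity whatsoever: for every $G\in CB(X)$, every $y_0\in Z_Y(G)$, every $v\in Z_Y(G,\delta)$, and any chosen $a\in G$,
\[
\|v-y_0\|\le\|v-a\|+\|a-y_0\|\le r(v,G)+r(y_0,G)\le 2\,rad_Y(G)+\delta,
\]
so $d(v,Z_Y(G))\le 2\,rad_Y(G)+\delta$. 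I would argue by contradiction: if $(2)$ failed there would be $\epsilon>0$, sets $G_n\in\mathcal F_2$ with $\beta_n:=rad_Y(G_n)\le\alpha$, scalars $\delta_n\to0$, and $v_n\in Z_Y(G_n,\delta_n)$ with $d(v_n,Z_Y(G_n))>\epsilon$. Sets with $\beta_n=0$ are singletons in $Y$, for which $Z_Y(G_n,\delta_n)\subseteq Z_Y(G_n)+\delta_n B_X$ handles matters directly, so $\beta_n>0$. The bound forces $\epsilon<2\beta_n+\delta_n$, whence $\beta_n>\epsilon/4$ for all large $n$; the offending radii are thereby confined to $[\epsilon/4,\alpha]$. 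Now dilation is harmless: set $H_n=G_n/\beta_n\in\{F:rad_Y(F)=1\}$ and $w_n=v_n/\beta_n$, so $w_n\in Z_Y(H_n,\delta_n/\beta_n)$ with $\delta_n/\beta_n\le 4\delta_n/\epsilon\to0$, while $d(w_n,Z_Y(H_n))=d(v_n,Z_Y(G_n))/\beta_n>\epsilon/\alpha$. Property-$(P_2)$ of $(3')$ with tolerance $\epsilon/\alpha$ supplies a fixed $\delta'>0$ with $Z_Y(H,\delta')\subseteq Z_Y(H)+(\epsilon/\alpha)B_X$ for all unit-radius $H$; since $\delta_n/\beta_n\le\delta'$ eventually, $w_n\in Z_Y(H_n)+(\epsilon/\alpha)B_X$, contradicting $d(w_n,Z_Y(H_n))>\epsilon/\alpha$. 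This closes the cycle. The only step demanding a real idea is the a priori radius bound, which localizes the analysis to radii in a compact subinterval of $(0,\infty)$ and thereby legitimizes the scaling reduction.
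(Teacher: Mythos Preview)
Your proof is correct and follows essentially the same route as the paper's: both use the a~priori bound $d(v,Z_Y(G))\le 2\,rad_Y(G)+\delta$ to dispose of sets with small restricted radius, and a translation--dilation by a restricted center to reduce large-radius sets to $\mathcal F_1$. The only cosmetic difference is that the paper gives an explicit $\delta_1=\epsilon\delta/3$ via a direct two-case split on $rad_Y(F)\lessgtr\epsilon/3$, whereas you package the same dichotomy as a sequential contradiction.
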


\begin{proof}
    $(1)\Rightarrow (2)$: Let $\epsilon >0.$ By $(1),$ there exists $0<\delta <1$ such that $Z_Y(G,\delta) \subseteq Z_Y(G)+\frac{\epsilon}{\alpha}B_X$ for every $G \in \mathcal{F}_1.$ Now for  $\delta_1 = \frac{\epsilon\delta}{3},$ we show that $Z_Y(F, \delta_1) \subseteq Z_Y(F)+\epsilon B_X$ for every $F\in\mathcal{F}_2.$  Let $F \in \mathcal{F}_2$. \\
    Case-(i): Let $ rad_Y(F) \leq \frac{\epsilon}{3}.$ Then for any $v \in Z_Y(F, \delta_1)$ and $w \in Z_Y(F)$ we have
    \[\|v-w\| \leq r(v, F)+r(w, F) \leq rad_Y(F)+\delta_1+rad_Y(F) < \epsilon.\]
    Thus, $Z_Y(F,\delta_1) \subseteq Z_Y(F)+\epsilon B_X.$ \\
    Case-(ii): Let $\frac{\epsilon}{3}< rad_Y(F) \leq \alpha.$ Consider $G=\frac{1}{rad_Y(F)}(F-w),$ for some $w \in Z_Y(F).$ Observe that $rad_Y(G)=r(0, G)=1.$ Thus, by assumption, we have $Z_Y(G,\delta) \subseteq Z_Y(G)+\frac{\epsilon}{\alpha}B_X.$ For any $\eta\geq 0$, it is easy to verify that
    \[ Z_Y(G, \eta) = \frac{1}{rad_Y(F)} \left(Z_Y(F, rad_Y(F)\eta)-w \right).\]
    Therefore, we have $Z_Y(F, rad_Y(F)\delta) \subseteq Z_Y(F)+rad_Y(F)\frac{\epsilon}{\alpha}B_X$ and hence it follows that $Z_Y(F,\delta_1) \subseteq Z_Y(F)+\epsilon B_X.$\\
    $(2)\Rightarrow(3)$: Obvious.\\
    $(3)\Rightarrow(1)$: Let $\epsilon>0.$ By (3), there exists $\delta>0$ such that $Z_Y(G,\alpha\delta)\subseteq Z_Y(G)+\alpha\epsilon B_X$ for every $G \in \mathcal{F}_3.$ Let $F\in \mathcal{F}_1.$ Since $\alpha F \in \mathcal{F}_3,$  we have $Z_Y(\alpha F, \alpha \delta) \subseteq Z_Y(\alpha F)+\alpha \epsilon B_X$, which further leads to  $Z_Y(F, \delta) \subseteq Z_Y(F)+\epsilon B_X.$ Hence, $(Y, \mathcal{F}_1)$ has property-$(P_2).$
\end{proof}

Now, we present some results which exhibit equivalent relations between property-$(P_2)$ and geometric properties.  

As a sufficient condition for the existence of restricted centers, Calder et al. \cite{CaCH1973} introduced the following notion. The closed ball centered at $x \in X$ with radius $r>0$ is denoted by $B[x,r]$.   

\begin{definition}\cite{CaCH1973}
    Let $Y$ be a subspace of $X$. We say $X$ is quasi uniformly rotund with respect to $Y$, if for every $\epsilon>0$ there exists $\delta>0$ such that for every $v \in Y$, there exists $w \in Y$ such that $ w \in B[0, \epsilon]$ and $B[0,1] \cap B[v,1-\delta] \subseteq B[w, 1-\delta]$.
\end{definition}

For more results on quasi uniform rotundity in terms of restricted centers, we refer to \cite{LPST2017,Rao2016,Vese2017}. In \cite[Lemma 3.5]{LPST2017}, it is shown that property-$(P_2)$ is a necessary condition for quasi uniform rotundity. However,  in \Cref{lem: QUR}, we show that the converse also holds through the continuity of the restricted center map. For this, we recall the following continuity definitions.  

\begin{definition}
Let $(N,d)$ be a metric space, $X$ be a Banach space and   $T: (N,d) \to (CB(X), \mathcal{H})$ be a set valued map. Then the map $T$ is said to be 
\begin{enumerate}
    \item upper Hausdorff semi-continuous (in short, uHsc) on $N$ if for every  $a_0 \in N$ and $\epsilon>0$ there exists $\delta>0$ such that $T(a) \subseteq T(a_0)+ \epsilon B_X$, whenever $a \in N$ and $d(a, a_0) < \delta$;
   
    \item  lower Hausdorff semi-continuous (in short, lHsc) on $N$ if for every $a_0 \in N$ and  $\epsilon>0$ there exists $\delta>0$ such that $T(a_0) \subseteq T(a)+ \epsilon B_X$, whenever  $a \in N$ and $d(a, a_0) < \delta$;
    
    \item Hausdorff continuous on $ N$ if it is both uHsc and lHsc on $N$;
    
    \item uniformly Hausdorff continuous on $N$ if  for every $\epsilon>0$ there exists $\delta>0$ such that $\mathcal{H}(T(a), T(b))< \epsilon$, whenever  $a,b \in N$ with $d(a, b) < \delta$.
\end{enumerate}
\end{definition}

In the above definition, if $T(a)$ is a singleton set for every $a \in N,$ then upper Hausdorff semi-continuity, lower Hausdorff semi-continuity and Hausdorff continuity of $T$ are equivalent on $N$. 

\begin{theorem}\label{lem: QUR}
    Let $Y$ be a subspace of $X$ and $\mathcal{F}=\{F \in CB(X): rad_{Y}(F) \leq \alpha\}$ for some $\alpha>0.$ Then the following statements are equivalent. 
    \begin{enumerate}
        \item $X$ is quasi uniformly rotund with respect to $Y$.
        
        \item $(Y, \mathcal{F})$ has property-$(P_2)$.
        
        \item The restricted center map $Z_Y$ is non-empty valued and  uniformly Hausdorff continuous on $(\mathcal{F}, \mathcal{H}).$ 
    \end{enumerate}
\end{theorem}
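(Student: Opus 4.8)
The plan is to prove the three-way equivalence by establishing the cycle $(1)\Rightarrow(2)\Rightarrow(3)\Rightarrow(1)$, invoking the earlier reductions wherever possible. For $(1)\Rightarrow(2)$, I would first observe that \Cref{prop:P_2Equi} lets me replace the collection $\mathcal{F}=\{F: rad_Y(F)\le\alpha\}$ by the normalized family $\mathcal{F}_1=\{F: rad_Y(F)=r(0,F)=1\}$, so it suffices to verify property-$(P_2)$ on $\mathcal{F}_1$. Here I would unwind the definition of quasi uniform rotundity: given $\epsilon>0$, choose the $\delta>0$ it provides. For $F\in\mathcal{F}_1$, after translating so that $0\in Z_Y(F)$ (possible since $(Y,CB(X))$ has r.c.p.), any $v\in Z_Y(F,\delta)$ satisfies $r(v,F)\le 1+\delta$, which translates into the inclusion $F\subseteq B[0,1]\cap B[v,1+\delta]$. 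Rescaling by $(1+\delta)^{-1}$ to land in the unit-ball geometry of the definition, I would extract from quasi uniform rotundity a point $w\in Y\cap B[0,\epsilon']$ with $F\subseteq B[w,1-\delta']$, forcing $r(w,F)\le rad_Y(F)=1$, hence $w\in Z_Y(F)$ and $\|v-w\|\le\epsilon'$ small. This shows $Z_Y(F,\delta)\subseteq Z_Y(F)+\epsilon B_X$ uniformly in $F$, which is exactly property-$(P_2)$.

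The implication $(2)\Rightarrow(3)$ should be essentially a citation: by hypothesis $(Y,\mathcal{F})$ has r.c.p., so $Z_Y$ is non-empty valued, and by \cite[Theorem 3.1]{LPST2017} property-$(P_2)$ yields uniform Hausdorff continuity of the restricted center map on $(\mathcal{F},\mathcal{H})$. I would just check that the family $\mathcal{F}$ used here matches the hypotheses of that theorem and cite it.

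For $(3)\Rightarrow(1)$, I would argue by contradiction against the definition of quasi uniform rotundity. Suppose $X$ is not quasi uniformly rotund with respect to $Y$: then there is $\epsilon_0>0$ such that for every $n$ there exists $v_n\in Y$ admitting no $w\in Y\cap B[0,\epsilon_0]$ with $B[0,1]\cap B[v_n,1-\tfrac1n]\subseteq B[w,1-\tfrac1n]$. The strategy is to manufacture, for each $n$, a set $F_n\in CB(X)$ realizing the geometry of $B[0,1]\cap B[v_n,1-\tfrac1n]$ so that its restricted centers relative to $Y$ behave badly; concretely I would take something like $F_n=B[0,1]\cap B[v_n,1-\tfrac1n]$ (intersected with a suitable set to stay bounded and nonempty, after normalizing $rad_{Y}$ into the range allowed by $\mathcal{F}$), and compare $Z_Y(F_n)$ with $Z_Y$ of a limiting set. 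The failure of the covering condition should translate into two nearby sets (in the Hausdorff metric) whose restricted centers stay $\epsilon_0$-separated, contradicting uniform Hausdorff continuity.

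The main obstacle I anticipate is exactly this last construction: turning the purely ball-theoretic failure of quasi uniform rotundity into a genuine pair of bounded sets in $CB(X)$ that are Hausdorff-close yet have far-apart restricted center sets, while simultaneously keeping $rad_Y$ inside the window $[0,\alpha]$ demanded by $\mathcal{F}$ and ensuring $Z_Y$ is actually non-empty on these sets. Controlling the radii and the Hausdorff distance $\mathcal{H}(F_n,F_{n'})\to 0$ as the sets converge, while the centers refuse to converge, is the delicate bookkeeping. I expect the cleanest route is to exploit the already-established equivalence $(1)\Leftrightarrow(2)$ together with the known fact \cite[Lemma 3.5]{LPST2017} that property-$(P_2)$ is necessary for quasi uniform rotundity, so that $(3)\Rightarrow(1)$ can perhaps be routed through $(3)\Rightarrow(2)\Rightarrow(1)$ instead, reducing the genuinely new content to showing uniform Hausdorff continuity forces property-$(P_2)$ — a semicontinuity argument that is more tractable than the direct ball construction.
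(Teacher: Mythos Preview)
Your handling of $(1)\Rightarrow(2)$ and $(2)\Rightarrow(3)$ matches the paper: it cites \cite[Lemma~3.5]{LPST2017} together with \Cref{prop:P_2Equi} for the first implication, and \cite[Theorem~3.1]{LPST2017} for the second, which is exactly what you outline (you just unwind the Lemma~3.5 argument rather than citing it).

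The gap is in $(3)\Rightarrow(1)$. The paper does not attempt any ball construction at all; it simply invokes \cite[Theorem~3.10 and Remark~3.12]{Vese2017}, where Vesel\'{y} proves directly that non-empty valuedness plus uniform Hausdorff continuity of $Z_Y$ on a family of this type forces quasi uniform rotundity with respect to $Y$. Your fallback route $(3)\Rightarrow(2)\Rightarrow(1)$ is circular: you call $(1)\Leftrightarrow(2)$ ``already established,'' but only $(1)\Rightarrow(2)$ is available---that is precisely what \cite[Lemma~3.5]{LPST2017} says (property-$(P_2)$ is \emph{necessary} for quasi uniform rotundity, i.e., QUR $\Rightarrow$ $(P_2)$). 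The reverse direction $(2)\Rightarrow(1)$ is the genuinely new content of the present theorem and cannot be assumed midway through its own proof. Moreover, the step $(3)\Rightarrow(2)$ is known to fail for general families (see \cite[Example~3.2]{LPST2017}, as noted in the introduction), so even that link would demand an argument specific to $\mathcal{F}=\{F: rad_Y(F)\le\alpha\}$ that you have not indicated. The direct construction you sketch first is closer in spirit to what Vesel\'{y} actually does, but rather than reinventing it you should cite \cite{Vese2017}.
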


\begin{proof}
    $(1) \Rightarrow (2)$: This implication follows from \cite[Lemma 3.5]{LPST2017} and \Cref{prop:P_2Equi}.\\
    $(2) \Rightarrow (3)$: This implication follows from \cite[Theorem 3.1]{LPST2017}.\\
    $(3) \Rightarrow (1)$: This implication  follows from \cite[Theorem 3.10 and Remark 3.12]{Vese2017}.
\end{proof}

Now, we present a characterization of uniform rotundity with respect to a subspace in terms of property-$(P_2)$, which is not explicitly stated in the literature; however, it follows from the results of \cite{AmZi1980,LPST2017}, and this characterization will be utilized in \Cref{sec:3}.

\begin{definition}\label{UR_wrt}\cite{AmZi1980}
    Let $Y$ be a subspace of $X$. We say $X$ is  uniformly rotund with respect to $Y$ if $(x_n)$ and $(y_n)$ are two sequences  in $S_X$ such that $x_n-y_n \in Y$ for every $n \in \mathbb{N}$ and $\left\Vert \frac{x_n+y_n}{2} \right\Vert \to 1,$  it follows that   $x_n-y_n \to 0$.
\end{definition}

\begin{theorem}\label{lem: UR w.r.t. Y}
    Let $Y$ be a subspace of $X$ and $\alpha>0$. Then the following statements are equivalent.
    \begin{enumerate}
        \item $X$ is uniformly rotund with respect to $Y.$
        
        \item $(Y, \mathcal{F}_1)$ has property-$(P_2)$ and $Z_Y(F)$ is  singleton for every $F \in \mathcal{F}_1$, where $\mathcal{F}_1=\{F \in CB(X): rad_Y(F) \leq \alpha\}$.
        
        \item $(Y, \mathcal{F}_2)$ has property-$(P_2)$ and $Z_Y(F)$ is  singleton for every $F \in \mathcal{F}_2$, where $\mathcal{F}_2=\{F \in K(X): rad_Y(F) \leq \alpha\}$.
    \end{enumerate}
    \end{theorem}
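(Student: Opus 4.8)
The plan is to prove the cycle $(1)\Rightarrow(2)\Rightarrow(3)\Rightarrow(1)$. The starting observation is that \Cref{lem: QUR} lets me read property-$(P_2)$ for $(Y,\mathcal F_1)$ as the statement that $Z_Y$ is non-empty valued and uniformly Hausdorff continuous on $\mathcal F_1$; together with the singleton condition this says exactly that $X$ admits unique relative Chebyshev centers over $\mathcal F_1$ that vary uniformly continuously. The implication $(2)\Rightarrow(3)$ is then immediate, since $\mathcal F_2\subseteq\mathcal F_1$, so both the singleton property and property-$(P_2)$ pass to the smaller family (the latter directly from the definition of property-$(P_2)$). This leaves the two substantive implications.

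For $(1)\Rightarrow(2)$ I would argue in two halves. For property-$(P_2)$, I would first check that uniform rotundity with respect to $Y$ forces $X$ to be quasi uniformly rotund with respect to $Y$ (here I would lean on the ball-covering form of that definition and on \cite{AmZi1980}), and then invoke the implication $(1)\Rightarrow(2)$ of \Cref{lem: QUR}. For uniqueness, suppose $F\in\mathcal F_1$ has two distinct centers $v_1,v_2$ with $\rho:=rad_Y(F)>0$; convexity of $r(\cdot,F)$ shows the midpoint $v=\tfrac{v_1+v_2}{2}$ is again a center, and choosing $a_n\in F$ with $\|v-a_n\|\to\rho$ and setting $x_n=\tfrac{v_1-a_n}{\rho}$, $y_n=\tfrac{v_2-a_n}{\rho}$ produces vectors in $B_X$ with $x_n-y_n=\tfrac{v_1-v_2}{\rho}\in Y$ constant and nonzero, $\|x_n\|,\|y_n\|\to 1$ and $\big\|\tfrac{x_n+y_n}{2}\big\|\to 1$, which must contradict \Cref{UR_wrt}.

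The delicate point is that \Cref{UR_wrt} is stated for unit vectors, whereas the $x_n,y_n$ above only have norms tending to $1$; I therefore expect the main obstacle to be an auxiliary \emph{ball-form} of uniform rotundity with respect to $Y$, namely that $x_n-y_n\to 0$ still holds for sequences in $B_X$. I would establish this by intersecting the line through $x_n$ in the direction $x_n-y_n\in Y$ with $S_X$: the relevant second crossing $w_n\in S_X$ satisfies $x_n-w_n\in Y$ and $\|x_n-w_n\|\geq\|x_n-y_n\|$, and a convexity estimate for the convex function $t\mapsto\|x_n+t(y_n-x_n)\|$ — using that $\|x_n-y_n\|$ is bounded below, so that the crossing parameter stays bounded — shows the midpoint norm still tends to $1$, reducing the situation to the sphere form already available. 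This normalization lemma is the technical heart of $(1)\Rightarrow(2)$, and the remaining steps are then routine.

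Finally, for $(3)\Rightarrow(1)$ I would argue contrapositively, and here the compact test sets make everything explicit. If $X$ is not uniformly rotund with respect to $Y$, fix $\epsilon_0>0$ and $x_n,y_n\in S_X$ with $x_n-y_n\in Y$, $\sigma_n:=\big\|\tfrac{x_n+y_n}{2}\big\|\to 1$ and $\|x_n-y_n\|\geq\epsilon_0$. I would test against the two-point (hence compact) sets $F_n=\{x_n,-y_n\}$, multiplied by $\alpha$ when $\alpha<1$ so that $F_n\in\mathcal F_2$. A direct computation gives $r(v,F_n)\geq\big\|\tfrac{x_n+y_n}{2}\big\|$ for all $v\in Y$ and $r\big(\tfrac{x_n-y_n}{2},F_n\big)=\sigma_n$, whence $rad_Y(F_n)=\sigma_n$ with $\tfrac{x_n-y_n}{2}\in Z_Y(F_n)$, while $r(0,F_n)=1$, so $0\in Z_Y\!\big(F_n,\,1-\sigma_n\big)$ with tolerance $1-\sigma_n\to 0$. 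Since uniqueness forces $Z_Y(F_n)=\big\{\tfrac{x_n-y_n}{2}\big\}$, the point $0$ is a near-minimizer at distance $\|x_n-y_n\|/2\geq\epsilon_0/2$ from the unique center; taking $\epsilon=\epsilon_0/4$, no $\delta>0$ can work in property-$(P_2)$ for $(Y,\mathcal F_2)$, the desired contradiction. I expect this direction to be the cleanest part once the sets $F_n=\{x_n,-y_n\}$ are chosen, the only care being the rescaling needed to meet the constraint $rad_Y\leq\alpha$.
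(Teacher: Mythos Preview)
Your argument is correct, but it takes a markedly different route from the paper. The paper's proof is essentially a string of citations: $(1)\Leftrightarrow(2)$ is read off directly from \cite[Theorem~3.9]{LPST2017}; $(2)\Rightarrow(3)$ is declared obvious; and $(3)\Rightarrow(1)$ is obtained by first invoking \cite[Theorem~3.1]{LPST2017} to get uniform Hausdorff continuity of $Z_Y$ on $(\mathcal{F}_2,\mathcal{H})$ and then applying \cite[Theorem~1.11]{AmZi1980} (which characterizes uniform rotundity with respect to $Y$ via uniform continuity of the center map on compact sets). No computations are carried out in the paper itself.

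By contrast, you reprove the substance of those cited results. For $(1)\Rightarrow(2)$ you route property-$(P_2)$ through quasi uniform rotundity and \Cref{lem: QUR}, and you give a direct sequential argument for uniqueness, correctly flagging and resolving the normalization issue (passing from $B_X$ to $S_X$ while keeping the difference in $Y$) as the technical heart. For $(3)\Rightarrow(1)$ you build explicit two-point test sets $F_n=\{x_n,-y_n\}$, compute $rad_Y(F_n)=\sigma_n$ with center $(x_n-y_n)/2$, and use the singleton hypothesis of (3) to locate the unique center and violate property-$(P_2)$; this is essentially the content of the Amir--Ziegler argument made concrete. What your approach buys is a self-contained proof that does not require the reader to consult \cite{LPST2017,AmZi1980}; what the paper's approach buys is brevity, since those results are already packaged in the literature.
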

    
    \begin{proof}
    $(1) \Leftrightarrow (2)$: These implications follow from  \cite[Theorem 3.9]{LPST2017}.\\
    $(2) \Rightarrow (3)$: Obvious.\\
    $(3) \Rightarrow (1)$: Since, by \cite[Theorem 3.1]{LPST2017}, the restricted center map $Z_Y$ is uniformly Hausdorff continuous on $(\mathcal{F}_2, \mathcal{H}),$ it follows from the assumption and \cite[Theorem 1.11]{AmZi1980}, that $X$ is uniformly rotund with respect to $Y$.
\end{proof}

\section{Restricted center properties under $\ell_p$-direct sum}\label{sec:3}

This section deals with the stability of property-$(P_1),$ property-$(P_2)$  and continuity of the restricted center map under $\ell_p$-direct sum $(1 \leq p < \infty).$ We need the following sequential characterizations of properties-$(P_1)$ and $(P_2)$ which are easy to verify.

\begin{proposition}\label{prop:p1p2}
    Let $V \in CL(X)$, $F \in CB(X)$ and $\mathcal{F} \subseteq CB(X)$. Then the following statements hold.
    \begin{enumerate}
        \item $(V, F)$ has property-$(P_1)$ if and only if $(V, F)$ has r.c.p. and for every minimizing  sequence $(x_n)$ for $F$ in $V$, there exists a sequence $(y_n)$ in $V$ such that  $y_n \in Z_V(F)$ for every $n \in \mathbb{N}$ satisfying $\|x_n-y_n\| \to 0.$

        \item $(V, \mathcal{F})$ has property-$(P_2)$ if and only if $(V, \mathcal{F})$ has r.c.p. and  whenever $(x_n)$ is a  sequence in $V$, $(F_n)$ is a  sequence in  $\mathcal{F}$ with $r(x_n,F_n)- rad_V(F_n) \to 0$, there exists a sequence $(y_n)$ in $V$ such that $y_n \in Z_V(F_n)$ for every $n \in \mathbb{N}$ satisfying $\|x_n-y_n\| \to 0.$
    \end{enumerate}
\end{proposition}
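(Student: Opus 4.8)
The plan is to prove both sequential characterizations by unwinding the quantifiers in the definitions of property-$(P_1)$ and property-$(P_2)$ and matching them against the $\epsilon$–$\delta$ formulation involving the sublevel sets $Z_V(F,\delta)$. Both statements are of the shape ``r.c.p. plus an $\epsilon$-$\delta$ inclusion'' being equivalent to ``r.c.p. plus a sequential approximation condition'', so in each case I would fix the r.c.p. hypothesis on both sides and focus entirely on the equivalence of the two approximation conditions.

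For part $(1)$, first I would prove the forward direction. Assume $(V,F)$ has property-$(P_1)$ and let $(x_n)$ be a minimizing sequence for $F$ in $V$, so $r(x_n,F)\to rad_V(F)$. Fix $\epsilon>0$ and take the $\delta>0$ from the definition so that $Z_V(F,\delta)\subseteq Z_V(F)+\epsilon B_X$. Since $r(x_n,F)-rad_V(F)\to 0$, for all large $n$ we have $r(x_n,F)\le rad_V(F)+\delta$, i.e.\ $x_n\in Z_V(F,\delta)$, hence $x_n\in Z_V(F)+\epsilon B_X$; this yields a point $y_n\in Z_V(F)$ with $\|x_n-y_n\|<\epsilon$ for all large $n$. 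A standard diagonal-style extraction over a sequence $\epsilon=1/k$ then produces a single sequence $(y_n)$ in $Z_V(F)$ with $\|x_n-y_n\|\to 0$. For the converse, I would argue by contraposition: if the $\epsilon$-$\delta$ inclusion fails, there is an $\epsilon_0>0$ such that for every $\delta=1/n$ one can pick $x_n\in Z_V(F,1/n)$ with $x_n\notin Z_V(F)+\epsilon_0 B_X$, i.e.\ $d(x_n,Z_V(F))\ge\epsilon_0$. Then $r(x_n,F)\le rad_V(F)+1/n$ forces $(x_n)$ to be minimizing, yet no sequence in $Z_V(F)$ can approximate it, contradicting the sequential condition.

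Part $(2)$ is essentially the same argument, but with the fixed set $F$ replaced by a varying sequence $(F_n)\subseteq\mathcal{F}$ and the minimizing condition replaced by $r(x_n,F_n)-rad_V(F_n)\to 0$. For the forward direction, property-$(P_2)$ gives, for each $\epsilon=1/k$, a uniform $\delta_k>0$ valid for all $G\in\mathcal{F}$ with $Z_V(G,\delta_k)\subseteq Z_V(G)+\tfrac1k B_X$; since $r(x_n,F_n)-rad_V(F_n)\to 0$, eventually $x_n\in Z_V(F_n,\delta_k)\subseteq Z_V(F_n)+\tfrac1k B_X$, and extracting $y_n\in Z_V(F_n)$ along a diagonal gives $\|x_n-y_n\|\to 0$. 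The converse again runs by contraposition: failure of the uniform inclusion produces an $\epsilon_0>0$ and, for each $n$, a set $F_n\in\mathcal{F}$ together with $x_n\in Z_V(F_n,1/n)$ with $d(x_n,Z_V(F_n))\ge\epsilon_0$, which supplies exactly the sequence violating the sequential condition.

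I do not anticipate a serious obstacle here, since these are routine reformulations; the only point requiring a little care is the diagonalization that upgrades the family of conclusions ``for each $\epsilon$, eventually $\|x_n-y_n\|<\epsilon$'' into a single sequence $(y_n)$ with $\|x_n-y_n\|\to 0$. The cleanest way to handle this is to build $(y_n)$ termwise: for each $n$, choose $y_n\in Z_V(F_n)$ (or $Z_V(F)$ in part $(1)$) realizing the infimum $d(x_n,Z_V(F_n))$ up to an error of $1/n$, and then verify directly that $\|x_n-y_n\|\to 0$ from the eventual inclusions. One should also confirm that the r.c.p.\ hypothesis guarantees $Z_V(F_n)$ is non-empty so that such a choice is possible, which is precisely why r.c.p.\ is assumed on both sides of each equivalence.
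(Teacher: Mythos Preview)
Your proposal is correct and is precisely the standard argument one would expect here. In fact, the paper does not give a proof of this proposition at all: it is stated with the remark that the sequential characterizations ``are easy to verify'', and no proof environment follows. Your outline---using the $\epsilon$-$\delta$ inclusion to show $d(x_n,Z_V(F))\to 0$ (resp.\ $d(x_n,Z_V(F_n))\to 0$) and then selecting $y_n$ within $1/n$ of that distance, together with the contrapositive for the reverse implication---is exactly the routine verification the authors have in mind, so there is nothing to compare.
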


To see the stability of property-$(P_1)$, we need the following lemma, which is a generalization of \cite[Lemma 3]{LuSZ2016}.

\begin{lemma}\label{lem:unimini}
    Let $\{X_i : {i\in \mathbb{N}}\}$ be a collection of Banach spaces, $Y_i$ be a  subspace of $X_i$ for every $i\in \mathbb{N}$ and $1\leq p <\infty.$ Let $X=(\oplus_pX_i)_{i \in \mathbb{N}},$ $Y=(\oplus_pY_i)_{i \in \mathbb{N}},$ $F=\Pi_{i \in \mathbb{N}}F_i \in \mathcal{P}(X)$ and $(y_n)$ be a minimizing sequence for $F$ in $Y$, where $y_n=(y_{n,i})_{i \in \mathbb{N}}$ for every $n \in \mathbb{N}.$ Then for every $\epsilon >0$ there exists $j \in \mathbb{N}$ such that $\sum_{i>j}\|y_{n,i}\|^p < \epsilon^p$ for all $n \in \mathbb{N}.$
\end{lemma}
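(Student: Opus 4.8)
The plan is to reduce everything to the two radius formulas in \Cref{prop:RadiusEquality_p} and then exploit that a minimizing sequence forces the total ``excess'' $r(y_n,F)^p - rad_Y(F)^p$ to vanish. First I would record the pointwise coordinate bound
\[
\|y_{n,i}\| \le r(y_{n,i},F_i) + r(0,F_i) \qquad (i,n \in \mathbb{N}),
\]
which holds because any $a \in F_i$ satisfies $\|y_{n,i}\| \le \|y_{n,i}-a\| + \|a\| \le r(y_{n,i},F_i) + r(0,F_i)$. Summing the $p$-th powers over a tail $i>j$ and applying the triangle inequality in $\ell_p$ (Minkowski) gives
\[
\Big(\sum_{i>j}\|y_{n,i}\|^p\Big)^{1/p} \le \Big(\sum_{i>j} r(y_{n,i},F_i)^p\Big)^{1/p} + \Big(\sum_{i>j} r(0,F_i)^p\Big)^{1/p}.
\]
Since $0 \in X$ and $F \in CB(X)$ is bounded, \Cref{prop:RadiusEquality_p}$(1)$ gives $\sum_{i} r(0,F_i)^p = r(0,F)^p < \infty$, so the second term tends to $0$ as $j\to\infty$, uniformly in $n$.

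The heart of the argument is the uniform control of the first term. Writing $r(y_{n,i},F_i)^p = rad_{Y_i}(F_i)^p + [r(y_{n,i},F_i)^p - rad_{Y_i}(F_i)^p]$ with both summands nonnegative (as $r(y_{n,i},F_i)\ge rad_{Y_i}(F_i)$), I would split the tail as
\[
\sum_{i>j} r(y_{n,i},F_i)^p = \sum_{i>j} rad_{Y_i}(F_i)^p + \sum_{i>j}\big[r(y_{n,i},F_i)^p - rad_{Y_i}(F_i)^p\big].
\]
The first piece is independent of $n$ and, by \Cref{prop:RadiusEquality_p}$(2)$, is the tail of the convergent series $\sum_i rad_{Y_i}(F_i)^p = rad_Y(F)^p$, hence tends to $0$ as $j\to\infty$. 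The second piece is bounded above, for every $j$, by the full excess
\[
\sum_{i\in\mathbb{N}}\big[r(y_{n,i},F_i)^p - rad_{Y_i}(F_i)^p\big] = r(y_n,F)^p - rad_Y(F)^p,
\]
which tends to $0$ as $n\to\infty$ because $(y_n)$ is minimizing. Thus, given $\epsilon>0$, one fixes $N$ so that this excess is below a prescribed threshold for all $n>N$, and simultaneously fixes $j$ large enough that both $n$-independent tails above are small; combining the three estimates and using $2^{1/p}\le 2$ yields $\sum_{i>j}\|y_{n,i}\|^p < \epsilon^p$ for all $n>N$.

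The remaining, and essentially only genuine, obstacle is that the minimizing hypothesis controls the excess only for large $n$, so the finitely many indices $n \le N$ must be treated by hand. For each such $n$, the element $y_n$ lies in $Y \subseteq X$, so $\|y_n\|^p = \sum_i \|y_{n,i}\|^p < \infty$ and its own tail $\sum_{i>j}\|y_{n,i}\|^p$ can be made smaller than $\epsilon^p$ by enlarging $j$. Taking the maximum of the finitely many indices $j$ produced for $n=1,\dots,N$ together with the index handling the regime $n>N$ gives a single $j$ that works for all $n$ simultaneously, which is exactly the assertion. I expect this large-$n$/finite-$n$ split, rather than any individual estimate, to be the only point requiring care, and it is precisely where the present statement generalizes \cite[Lemma 3]{LuSZ2016} from best approximation to restricted centers.
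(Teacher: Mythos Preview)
Your argument is correct. The paper proceeds by contradiction: assuming the conclusion fails for some $\epsilon>0$, it fixes $j_0$ so that $\sum_{i>j_0} r(0,F_i)^p < (\epsilon/3)^p$, extracts a subsequence $(y_{n_k})$ with $\sum_{i>j_0}\|y_{n_k,i}\|^p \ge \epsilon^p$, and then shows directly that $r(y_{n_k},F)^p \ge rad_Y(F)^p + (\epsilon/3)^p$ by bounding $\sum_{i>j_0} r(y_{n_k,i},F_i)^p$ from \emph{below} via the reverse triangle inequality and bounding $\sum_{i\le j_0} r(y_{n_k,i},F_i)^p + \sum_{i>j_0} r(0,F_i)^p$ from below by $rad_Y(F)^p$; this contradicts the minimizing hypothesis. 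Your direct approach instead decomposes the tail $\sum_{i>j} r(y_{n,i},F_i)^p$ additively into the $n$-independent tail $\sum_{i>j} rad_{Y_i}(F_i)^p$ plus a piece dominated by the global excess $r(y_n,F)^p-rad_Y(F)^p$, and then handles the finitely many indices $n\le N$ separately. Both proofs rest on the same two radius formulas from \Cref{prop:RadiusEquality_p} and the same coordinate inequality $\|y_{n,i}\|\le r(y_{n,i},F_i)+r(0,F_i)$; the paper's contradiction packaging is slightly shorter because the subsequence device absorbs your large-$n$/finite-$n$ split automatically, while your version is more constructive and makes the role of the excess $r(y_n,F)^p-rad_Y(F)^p$ explicit.
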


\begin{proof}
Suppose there exists $\epsilon >0$ such that for every $j \in \mathbb{N}$ there exists a subsequence $(n_m)$, depending on $j$, such that $\sum_{i>j}\|y_{n_m, i}\|^p \geq \epsilon^p$ for all $m \in \mathbb{N}.$ Since, by \Cref{prop:RadiusEquality_p}, $r(0, F)^p = \sum_{i \in \mathbb{N}} r(0, F_i)^p,$ there exists $j_0 \in \mathbb{N}$ such that $\sum_{i>j_0} r(0, F_i)^p < (\frac{\epsilon}{3})^p.$ By preceding argument, for this ‘$j_0$’ there exists a subsequence $(n_k)$ such that $\sum_{i>j_0} \|y_{n_k, i}\|^p \geq \epsilon^p$ for all $k \in \mathbb N.$
For any $k \in \mathbb N,$ by \Cref{prop:RadiusEquality_p}, we have
\begin{align*}
    r(y_{n_k}, F)^p  & = \left(\sum_{i \leq j_0} r(y_{n_k,i}, F_i)^p +\sum_{i>j_0} r(0, F_i)^p \right) -\sum_{i>j_0} r(0, F_i)^p+ \sum_{i >j_0} r(y_{n_k,i}, F_i)^p \\
    & > rad_Y(F)^p - \left(\frac{\epsilon}{3}\right)^p +  \left[\left(\sum_{i>j_0} \|y_{n_k,i}\|^p\right)^{\frac{1}{p}}-\left(\sum_{i>j_0} r(0, F_i)^p\right)^{\frac{1}{p}}\right]^p \\
    & > rad_Y(F)^p - \left(\frac{\epsilon}{3}\right)^p +\left(\epsilon-\frac{\epsilon}{3}\right)^p\\
    & \geq rad_Y(F)^p + \left(\frac{\epsilon}{3}\right)^p.
\end{align*}
This is a contradiction. Hence the proof. 
\end{proof}

\begin{theorem}\label{thm:pds_P1}
    Let $\{X_i: {i\in \mathbb{N}}\}$ be a collection of Banach spaces, $Y_i$ be a  subspace of $X_i$ for every $i\in \mathbb{N}$ and $1\leq p < \infty.$ Let $X=(\oplus_pX_i)_{i \in  \mathbb{N}},$ $Y=(\oplus_pY_i)_{i \in  \mathbb{N}}$ and $F=\Pi_{i \in \mathbb{N}}F_i \in \mathcal{P}(X).$ Then $(Y_i, F_i)$ has property-$(P_1)$ for every $i\in \mathbb{N}$ if and only if  $(Y, F)$ has property-$(P_1).$ 
\end{theorem}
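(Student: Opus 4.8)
The plan is to prove both directions using the sequential characterization of property-$(P_1)$ from \Cref{prop:p1p2}(1), combined with the r.c.p.\ stability and the product structure $Z_Y(F)=\Pi_{i\in\mathbb{N}}Z_{Y_i}(F_i)$ already established in \Cref{prop: rcp l_p}, together with the uniform decay of minimizing sequences granted by \Cref{lem:unimini}. In both directions r.c.p.\ is automatic from \Cref{prop: rcp l_p}, so the work is entirely in transferring the approximation condition on minimizing sequences.

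For the converse direction (each factor has property-$(P_1)$ $\Rightarrow$ the sum does), I would start with an arbitrary minimizing sequence $(y_n)$ for $F$ in $Y$, writing $y_n=(y_{n,i})_{i\in\mathbb{N}}$. The restricted radius formula in \Cref{prop:RadiusEquality_p} gives $r(y_n,F)^p=\sum_i r(y_{n,i},F_i)^p$ and $rad_Y(F)^p=\sum_i rad_{Y_i}(F_i)^p$, and since $r(y_{n,i},F_i)\ge rad_{Y_i}(F_i)$ for every $i$, the fact that $r(y_n,F)\to rad_Y(F)$ forces each coordinate sequence $(y_{n,i})_n$ to be minimizing for $F_i$ in $Y_i$ as $n\to\infty$ (the total excess is the sum of the nonnegative coordinate excesses, each of which must therefore tend to zero). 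Applying property-$(P_1)$ in each $X_i$ via \Cref{prop:p1p2}(1) yields, for each fixed $i$, a sequence $(w_{n,i})_n$ in $Z_{Y_i}(F_i)$ with $\|y_{n,i}-w_{n,i}\|\to 0$. Setting $w_n=(w_{n,i})_{i\in\mathbb{N}}$, we have $w_n\in\Pi_i Z_{Y_i}(F_i)=Z_Y(F)$ by \Cref{prop: rcp l_p}, and the remaining task is to show $\|y_n-w_n\|\to 0$ in the $\ell_p$-norm, i.e.\ $\sum_i\|y_{n,i}-w_{n,i}\|^p\to 0$.

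The main obstacle is precisely this last step: coordinatewise convergence of the errors does not by itself give $\ell_p$-convergence of the aggregate error, so a uniform tail estimate is essential. This is exactly where \Cref{lem:unimini} enters. Given $\epsilon>0$, the lemma produces $j\in\mathbb{N}$ with $\sum_{i>j}\|y_{n,i}\|^p<\epsilon^p$ uniformly in $n$; one must then also control $\sum_{i>j}\|w_{n,i}\|^p$, which follows from a parallel tail bound on the centers $w_n$ (these form a minimizing sequence as well, being in $Z_Y(F)$, so the lemma applies to them too after possibly enlarging $j$). Combining these with the triangle inequality in $\ell_p$ bounds the tail $\sum_{i>j}\|y_{n,i}-w_{n,i}\|^p$ uniformly, while the finite head $\sum_{i\le j}\|y_{n,i}-w_{n,i}\|^p\to 0$ because it is a finite sum of coordinatewise-vanishing terms. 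Splitting at this $j$ and letting $n\to\infty$ then $\epsilon\to 0$ completes the argument.

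For the forward direction (the sum has property-$(P_1)$ $\Rightarrow$ each factor does), I would fix an index $j$ and a minimizing sequence $(y_{n,j})_n$ for $F_j$ in $Y_j$, then embed it into a minimizing sequence for $F$ by padding the other coordinates with fixed restricted centers: choose $c_i\in Z_{Y_i}(F_i)$ for $i\ne j$ (available by \Cref{prop: rcp l_p}) and set $y_n=(\ldots,c_{j-1},y_{n,j},c_{j+1},\ldots)$. By \Cref{prop:RadiusEquality_p} this $(y_n)$ is minimizing for $F$ in $Y$, so property-$(P_1)$ of $(Y,F)$ yields $(w_n)$ in $Z_Y(F)$ with $\|y_n-w_n\|\to 0$; projecting onto the $j$-th coordinate and using $\|y_{n,j}-w_{n,j}\|\le\|y_n-w_n\|$ together with $w_{n,j}\in Z_{Y_j}(F_j)$ gives the required approximation in $X_j$. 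This direction is routine once the embedding is set up, as the $\ell_p$-norm dominates each coordinate norm, so no delicate tail analysis is needed here.
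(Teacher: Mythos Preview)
Your proposal is correct and follows essentially the same route as the paper's proof: both directions use the sequential characterization from \Cref{prop:p1p2}, the product formula $Z_Y(F)=\Pi_i Z_{Y_i}(F_i)$ from \Cref{prop: rcp l_p}, and the tail control from \Cref{lem:unimini}, with the key observation that the constructed sequence $(w_n)\subseteq Z_Y(F)$ is itself minimizing so the lemma applies to it as well. The only cosmetic difference is your labeling of which implication is ``forward'' and which is ``converse''; the paper proves the factorwise-to-sum direction first.
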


\begin{proof}
Let $(Y_i, F_i)$ has property-$(P_1)$ for every $i\in \mathbb{N}$. By \Cref{prop: rcp l_p}, $(Y, F)$ has r.c.p. Let $(y_n)$ be a minimizing sequence for $F$ in $Y$, where $y_n=(y_{n,i})_{i \in \mathbb{N}}$ for all $n \in \mathbb{N}.$ Let $i \in \mathbb{N}$. By \Cref{prop:RadiusEquality_p}, we have  
\begin{align*}
    rad_{Y_i}(F_i)^p &\leq r(y_{n, i}, F_i)^p \\
    & =  r(y_n, F)^p -\sum_{j \in \mathbb{N} \backslash \{i\}}r(y_{n,j}, F_j)^p \\
    & \leq  r(y_n, F)^p -\sum_{j \in \mathbb{N} \backslash \{i\}} rad_{Y_j}(F_j)^p + rad_Y(F)^p-rad_Y(F)^p \\
    & \leq r(y_n, F)^p-rad_Y(F)^p+ rad_{Y_i}(F_i)^p.
\end{align*}
Thus, $r(y_{n,i}, F_i) \to rad_{Y_i}(F_i).$ Since $(Y_i, F_i)$ has property-$(P_1),$ by \Cref{prop:p1p2},  there exists $z_{n,i} \in Z_{Y_i}(F_i)$ for every $n \in \mathbb{N}$ such that $\|y_{n, i}-z_{n, i}\| \to 0$ as $n \to \infty.$ By \Cref{prop: rcp l_p}, we have $z_n=(z_{n,i})_{i\in \mathbb N} \in Z_Y(F)$ for every $n \in \mathbb{N}.$ Now, by \Cref{prop:p1p2}, it is enough to prove that $\|y_n-z_n\|\to 0.$ To see this let $\epsilon >0.$ By \Cref{lem:unimini}, there exists $j \in \mathbb N$ such that $\sum_{i>j}\|y_{n,i}\|^p < \epsilon^p$ and $\sum_{i>j}\|z_{n,i}\|^p < \epsilon^p$ for all $n \in \mathbb{N}$. Also, there exists $n_0 \in \mathbb N$ such that $\sum_{i \leq j}\|y_{n, i}-z_{n,i}\|^p \leq (2\epsilon)^p$ for all $n \geq n_0.$ Therefore for any $n  \geq n_0$, we have
\begin{align*}
    \|y_n-z_n\|^p & = \sum_{i \leq j}\|y_{n,i}-z_{n,i}\|^p + \sum_{i>j} \|y_{n,i}-z_{n,i}\|^p \\
    & \leq (2\epsilon)^p + \left(\left(\sum_{i > j}\|y_{n,i}\|^p\right)^\frac{1}{p}+ \left(\sum_{i > j}\|z_{n,i}\|^p\right)^\frac{1}{p}\right)^p\\
    & < (2\epsilon)^p + (2\epsilon)^p.
\end{align*}
Thus, $\|y_n-z_n\| \to 0.$ To prove the converse, let $(Y, F)$ has property-$(P_1)$. It follows from \Cref{prop: rcp l_p}  that $(Y_i, F_i)$ has r.c.p for every $i \in \mathbb{N}$. Let $j \in \mathbb{N}$ and $(y_{n,j})_{n \in \mathbb{N}}$ be a minimizing sequence for $F_{j}$ in $Y_{j}$. For every $i \in \mathbb{N}$, choose  $y_i \in Z_{Y_i}(F_i)$. Now  for every $n \in \mathbb{N},$ consider
$y_{n,i}= y_i$ for every $ i \in \mathbb{N} {\setminus} \{j\}$ and $z_n=(y_{n,i})_{i \in \mathbb{N}} \in Y.$ By \Cref{prop:RadiusEquality_p}, it follows that
\[
rad_Y(F)^p \leq r(z_n,F)^p = \sum_{i \in \mathbb{N}} r(y_{n,i}, F_i)^p = \sum_{i \in \mathbb{N} {\setminus} \{j\}} rad_{Y_i}(F_i)^p+ r(y_{n, j}, F_{j})^p
\]
for all $n \in \mathbb{N}$ and hence  $r(z_n,F) \to rad_Y(F).$ Since $(Y, F)$ has property-$(P_1),$ by \Cref{prop:p1p2}, there exists $w_n=(w_{n,i})_{i \in \mathbb{N}} \in Z_Y(F)$ for every $n \in \mathbb{N}$ such that $\|z_n-w_n\| \to 0$, which further implies $\|y_{n,j}-w_{n,j}\| \to 0$. Note that, by \Cref{prop: rcp l_p}, $w_{n,j} \in Z_{Y_{j}}(F_{j})$ for all $n \in \mathbb{N}.$ Thus, $(Y_{j}, F_{j})$ has property-$(P_1)$. Hence the proof. 
\end{proof}

The following corollary is a direct consequence of \Cref{thm:pds_P1}.

\begin{corollary}\label{cor:pds_sp}
 Let $\{X_i: {i\in \mathbb{N}}\}$ be a collection of Banach spaces, $Y_i$ be a  subspace of $X_i$ for every $i\in \mathbb{N}$ and $1\leq p < \infty.$ Let $X=(\oplus_pX_i)_{i \in \mathbb{N}}$ and $Y=(\oplus_pY_i)_{i \in \mathbb{N}}.$ Then  $Y_i$ is strongly proximinal on $X_i$ for every $i\in \mathbb{N}$ if and only if $Y$ is strongly proximinal on $X$. 
\end{corollary}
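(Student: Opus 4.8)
The plan is to recognize this as a direct specialization of \Cref{thm:pds_P1} to singletons, using the dictionary between strong proximinality and property-$(P_1)$ set out in \Cref{sec:intr}. Recall that $Y_i$ is strongly proximinal on $X_i$ if and only if $(Y_i, \{x_i\})$ has property-$(P_1)$ for every $x_i \in X_i$, and likewise $Y$ is strongly proximinal on $X$ if and only if $(Y, \{x\})$ has property-$(P_1)$ for every $x \in X$. The observation that makes \Cref{thm:pds_P1} applicable is that every singleton in the product space factors as a product of singletons: for $x=(x_i)_{i\in\mathbb{N}} \in X$ one has $\{x\} = \Pi_{i\in\mathbb{N}}\{x_i\}$, and since each $\{x_i\} \in CB(X_i)$, this exhibits $\{x\}$ as an element of $\mathcal{P}(X)$ of the required product form.

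For the forward implication, I would assume each $Y_i$ is strongly proximinal on $X_i$, fix an arbitrary $x = (x_i)_{i\in\mathbb{N}} \in X$, and set $F_i = \{x_i\}$. Each pair $(Y_i, F_i)$ then has property-$(P_1)$ by hypothesis, so \Cref{thm:pds_P1} gives that $(Y, \Pi_{i\in\mathbb{N}} F_i) = (Y, \{x\})$ has property-$(P_1)$. As $x$ was arbitrary, $Y$ is strongly proximinal on $X$.

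For the converse, I would assume $Y$ is strongly proximinal on $X$, fix $j \in \mathbb{N}$ and an arbitrary $x_j \in X_j$, and aim to show $(Y_j, \{x_j\})$ has property-$(P_1)$. To do this, choose the remaining coordinates arbitrarily, say $x_i = 0$ for $i \neq j$, form $x = (x_i)_{i\in\mathbb{N}} \in X$, and write $\{x\} = \Pi_{i\in\mathbb{N}}\{x_i\}$. By hypothesis $(Y, \{x\})$ has property-$(P_1)$, so the converse direction of \Cref{thm:pds_P1} yields that $(Y_i, \{x_i\})$ has property-$(P_1)$ for every $i$, in particular for $i = j$. Since $j$ and $x_j$ were arbitrary, each $Y_j$ is strongly proximinal on $X_j$.

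Since the whole argument is an instance of \Cref{thm:pds_P1}, there is no substantive obstacle; the only point requiring care is the bookkeeping in the converse, namely that fixing one coordinate while letting the others range freely, together with the factorization $\{x\} = \Pi_{i\in\mathbb{N}}\{x_i\}$, suffices to recover strong proximinality in each coordinate.
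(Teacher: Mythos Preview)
Your proposal is correct and matches the paper's approach: the paper states that \Cref{cor:pds_sp} is a direct consequence of \Cref{thm:pds_P1}, and your argument simply unpacks that implication by specializing to singleton sets $F_i=\{x_i\}$ and using the factorization $\{x\}=\Pi_{i\in\mathbb{N}}\{x_i\}\in\mathcal{P}(X)$. The bookkeeping you provide for both directions is exactly what is needed.
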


For any $V \in CL(X)$ and $\mathcal{F} \subseteq CB(X),$ we say that the pair $(V, \mathcal{F})$ is well posed (respectively, generalized well posed) if every minimizing sequence for $F \in \mathcal{F}$ in $V$ converges (respectively, has a convergent subsequence). The subsequent result on the stability of well posed-ness and generalized well posed-ness follows using \Cref{prop: rcp l_p} and a similar technique involved in the proof of \Cref{thm:pds_P1}.

\begin{corollary}
    Let $\{X_i: {i\in \mathbb{N}}\}$ be a collection of Banach spaces, $Y_i$ be a  subspace of $X_i$ for every $i\in \mathbb{N}$ and $1\leq p < \infty.$ Let $X=(\oplus_pX_i)_{i \in  \mathbb{N}}$ and  $Y=(\oplus_pY_i)_{i \in  \mathbb{N}}$. Then $(Y_i, CB(X_i))$ is well posed (respectively, generalized well posed) for every $i\in \mathbb{N}$ if and only if  $(Y, \mathcal{P}(X))$ is well posed (respectively, generalized well posed).
\end{corollary}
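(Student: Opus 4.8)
The plan is to prove both directions by reducing the statement about well posed-ness (and generalized well posed-ness) of $(Y, \mathcal{P}(X))$ to the corresponding statements for each coordinate $(Y_i, CB(X_i))$, using \Cref{prop: rcp l_p} together with the coordinate-control provided by \Cref{lem:unimini}. The key structural fact I would exploit throughout is that, by \Cref{prop:RadiusEquality_p} and the argument already carried out in \Cref{thm:pds_P1}, if $(y_n)$ with $y_n = (y_{n,i})_{i\in\mathbb{N}}$ is a minimizing sequence for $F = \Pi_{i\in\mathbb{N}} F_i$ in $Y$, then for each fixed $i$ the coordinate sequence $(y_{n,i})_n$ is minimizing for $F_i$ in $Y_i$; this follows verbatim from the chain of inequalities bounding $rad_{Y_i}(F_i)^p$ between $r(y_n,F)^p$-type quantities.

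For the forward direction, assume each $(Y_i, CB(X_i))$ is well posed. Given $F \in \mathcal{P}(X)$ and a minimizing sequence $(y_n)$ for $F$ in $Y$, each coordinate sequence $(y_{n,i})_n$ is minimizing for $F_i$, hence converges to some $z_i \in Y_i$ by hypothesis. First I would check $z = (z_i)_{i\in\mathbb{N}} \in Y$: since each $z_i \in Z_{Y_i}(F_i)$ (minimizing sequences in a well posed pair converge to the restricted center, which exists by well posed-ness), \Cref{prop: rcp l_p} gives $z \in Z_Y(F) \subseteq Y$. The main work is to upgrade coordinatewise convergence $y_{n,i} \to z_i$ to norm convergence $\|y_n - z\| \to 0$ in the $\ell_p$-sum. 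This is exactly where \Cref{lem:unimini} is indispensable: for $\epsilon > 0$ it produces a single $j$ with $\sum_{i>j}\|y_{n,i}\|^p < \epsilon^p$ uniformly in $n$, and (applying it to the tail estimate for the $z_i$, or noting $z$ is a fixed element of $Y$) one controls the tail of $\|y_n - z\|^p$ by a triangle inequality in $\ell_p$, while the finite head $\sum_{i\le j}\|y_{n,i}-z_i\|^p$ is handled by ordinary coordinatewise convergence for $n$ large. The estimate is essentially the same split as in the last display of the proof of \Cref{thm:pds_P1}.

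For the converse, assume $(Y, \mathcal{P}(X))$ is well posed. Fix $j$ and a minimizing sequence $(y_{n,j})_n$ for $F_j$ in $Y_j$. I would embed it into a minimizing sequence for $F$ by freezing all other coordinates at fixed restricted centers $y_i \in Z_{Y_i}(F_i)$ (these exist since $(Y,F)$ having r.c.p. forces each $(Y_i,F_i)$ to, by \Cref{prop: rcp l_p}); setting $z_n = (\ldots, y_{i}, \ldots, y_{n,j}, \ldots)$, the radius formula shows $r(z_n, F) \to rad_Y(F)$, so $(z_n)$ is minimizing for $F$ in $Y$. By the well posed-ness hypothesis $(z_n)$ converges in $Y$, and projecting onto the $j$-th coordinate gives convergence of $(y_{n,j})_n$ in $Y_j$; hence $(Y_j, CB(X_j))$ is well posed. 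This mirrors the converse construction in \Cref{thm:pds_P1} almost line for line. The generalized well posed case is identical throughout, replacing ``converges'' by ``has a convergent subsequence'': in the forward direction one extracts subsequences coordinatewise via a diagonal argument and applies \Cref{lem:unimini} to that subsequence, and in the converse one simply passes to the subsequence furnished by the hypothesis.

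The main obstacle I anticipate is the tail control in the forward direction for the generalized case: passing to subsequences coordinate by coordinate requires a diagonalization, and one must verify that \Cref{lem:unimini}'s uniform tail bound still applies to the diagonal subsequence (it does, since the lemma's conclusion holds for all $n$, hence for any subsequence) and that the limiting element genuinely lies in the $\ell_p$-sum $Y$ rather than merely having finite coordinates. Everything else is a faithful adaptation of the techniques already developed for \Cref{thm:pds_P1}, which is precisely why the corollary can be stated without a full proof.
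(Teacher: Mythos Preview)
Your proposal is correct and follows essentially the same route the paper indicates: it reduces both directions to the coordinate-wise structure via \Cref{prop:RadiusEquality_p} and \Cref{prop: rcp l_p}, and handles the tail of the $\ell_p$-sum with \Cref{lem:unimini}, exactly as in the proof of \Cref{thm:pds_P1}. The paper itself does not write out a proof but merely records that the result ``follows using \Cref{prop: rcp l_p} and a similar technique involved in the proof of \Cref{thm:pds_P1}'', which is precisely what you have fleshed out. One small expository wrinkle: in the converse you invoke restricted centers $y_i \in Z_{Y_i}(F_i)$ before specifying the sets $F_i$ for $i\neq j$; the cleanest fix is simply to take $F_i=\{0\}$ for $i\neq j$ (so $y_i=0$), which avoids any circularity and keeps the product in $\mathcal{P}(X)$.
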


Now, we proceed to discuss the stability of property-$(P_2)$. It is easy to verify from the proof of \Cref{thm:pds_P1}  that if $(Y, \mathcal{F})$ has property-$(P_2)$, where $\mathcal{F}=\{F \in \mathcal{P}(X): rad_Y(F)=1\}$, then  $(Y_i, \mathcal{F}_i)$ has property-$(P_2)$, where $\mathcal{F}_i=\{F_i \in CB(X_i): rad_{Y_i}(F_i)=1\}$ for every $i\in \mathbb{N}$. However, the following example reveals that, for any $1 \leq p < \infty$, the converse does not hold for infinite collection. Nevertheless, in the subsequent result, we prove that the converse is true when the collection is finite.

\begin{example}\label{eg: p2}
    Let $1\leq p <  \infty$. For every $i \in \mathbb{N}$, let $X_i= (\mathbb{R}^3, \|\cdot\|_{p_i})$, where $p_i=1+\frac{1}{i}$ and  $Y_i=span\{(1,0,0), (0,1,0)\}$ be the subspace of $X_i.$ Consider $X=(\oplus_p X_i)_{i \in \mathbb{N}}$ and $Y=(\oplus_p Y_i)_{i \in \mathbb{N}}.$ Since each $X_i$ is uniformly rotund, by \cite[Theorem 3.8]{LPST2017}, it follows that $(Y_i, \mathcal{F}_i)$ has property-$(P_2)$ for every $i \in \mathbb{N}$, where $\mathcal{F}_i=\{F_i \in CB(X_i): rad_{Y_i}(F_i)=1\}.$ Now we prove that $(Y, \mathcal{F})$ does not have property-$(P_2)$, where $\mathcal{F}=\{F\in \mathcal{P}(X): rad_Y(F)=1\}.$ Let $n \in \mathbb{N}.$ Consider $x_n= (x_{n,i})_{i \in \mathbb{N}} \in Y$ and $y_n= (y_{n,i})_{i \in \mathbb{N}} \in Y$ where
    \begin{align*}
        x_{n,i}= \begin{cases}(0,0,0), & \text{if } i\neq n ;\\  (1,0,0), & \text{if } i=n,
\end{cases}& \hspace{2cm} y_{n,i}= \begin{cases}(0,0,0), & \text{if } i\neq n ;\\  (0,1,0), & \text{if } i=n
\end{cases}
    \end{align*}
    and $G_n=\left\{\frac{x_n+y_n}{\|x_n+y_n\|}, -\frac{x_n+y_n}{\|x_n+y_n\|}\right\} \in CB(X).$
    Since $G_n= \Pi_{i \in \mathbb{N}} G_{n,i}$ where 
    \[
G_{n,i}=\begin{cases}\{(0,0,0)\}, & \text{if } i\neq n ;\\ \left\{ \frac{(1,1,0)}{\|(1,1,0)\|_{p_n}}, \frac{-(1,1,0)}{\|(1,1,0)\|_{p_n}}\right\}, & \text{if } i=n,
\end{cases}
\]
we have $G_n \in \mathcal{P}(X).$ Observe that $rad_Y(G_{n})=r(0, G_n)= 1.$ Define  $w_n=\frac{x_n-y_n}{\|x_n+y_n\|} \in Y.$     Since 
    \[
    rad_Y(G_n) \leq r(w_n,G_n) = \max\left\{\left\Vert w_n+\frac{x_n+y_n}{\|x_n+y_n\|}\right\Vert, \left\Vert w_n-\frac{x_n+y_n}{\|x_n+y_n\|}\right\Vert \right\}=\frac{2}{\|x_n+y_n\|}
    \]
    for all $n \in \mathbb{N}$ and $\left\Vert\frac{x_n+y_n}{2}\right\Vert \to 1$, we have $r(w_n, G_n) -rad_Y(G_n)  \to 0$. Now, by \Cref{prop:p1p2}, it is enough to prove that $d(w_n, Z_Y(G_n))$ does not converge to $0$. Fix $n \in \mathbb{N}$.  Since for any $i \in \mathbb{N}$, $X_i$ is uniformly rotund, by \cite{Gark1962}, it follows that $Z_{Y_i}(G_{n,i})$ is singleton. Observe that  $Z_{Y_i}(G_{n,i})=\{0\}$ for all $i \in \mathbb{N}$ further, using \Cref{prop: rcp l_p}, we have  $Z_Y(G_n)=\{0\}.$ Thus,
    \[
 d(w_n, Z_Y(G_n))=\|w_n-0\|= \left\Vert \frac{x_n-y_n}{\|x_n+y_n\|} \right\Vert=1.
    \]
   Hence, $(Y, \mathcal{F})$ does not have property-$(P_2)$.
\end{example}

\begin{theorem}\label{thm:pfds_P2}
     Let $\{X_i :{i\in I}\}$ be a finite collection of Banach spaces, $Y_i$ be a subspace of $X_i$ for every $i\in I$ and $1\leq p < \infty.$ Let $X=(\oplus_pX_i)_{i \in I}$ and $Y=(\oplus_pY_i)_{i \in I}.$ Then $(Y_i, \mathcal{F}_i)$ has property-$(P_2)$ for every $i\in I$, where $\mathcal{F}_i=\{F_i \in CB(X_i): rad_{Y_i}(F_i)= 1\}$  if and only if  $(Y, \mathcal{F})$ has property-$(P_2)$, where $\mathcal{F}=\{F \in \mathcal{P}(X): rad_Y(F) = 1\}$.
\end{theorem}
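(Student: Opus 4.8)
The plan is to establish the nontrivial implication, that property-$(P_2)$ of every factor $(Y_i,\mathcal{F}_i)$ forces property-$(P_2)$ of $(Y,\mathcal{F})$; the reverse implication is already recorded in the remark preceding \Cref{eg: p2}, so I would simply invoke it. Throughout I would work with the sequential characterization in \Cref{prop:p1p2}(2). First, the restricted center property of $(Y,\mathcal{F})$ comes from \Cref{prop: rcp l_p}: if $F=\Pi_{i\in I}F_i\in\mathcal{P}(X)$ has $rad_Y(F)=1$, then $\sum_{i\in I}rad_{Y_i}(F_i)^p=1$ by \Cref{prop:RadiusEquality_p}, hence each $rad_{Y_i}(F_i)\le 1$ and each $(Y_i,F_i)$ has r.c.p., so $(Y,F)$ has r.c.p.

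The decoupling step is the technical core. Take a sequence $(x_n)$ in $Y$ and a sequence $(G_n)$ in $\mathcal{F}$ with $r(x_n,G_n)-rad_Y(G_n)\to 0$, and write $x_n=(x_{n,i})_{i\in I}$ and $G_n=\Pi_{i\in I}G_{n,i}$. By \Cref{prop:RadiusEquality_p} one has $\sum_{i\in I}\bigl(r(x_{n,i},G_{n,i})^p-rad_{Y_i}(G_{n,i})^p\bigr)=r(x_n,G_n)^p-rad_Y(G_n)^p\to 0$, a sum of nonnegative terms, so each term tends to $0$. Using the elementary bound $(a-b)^p\le a^p-b^p$ valid for $0\le b\le a$ and $p\ge 1$ (which reduces, after dividing by $a^p$, to $(1-t)^p+t^p\le 1$ for $t\in[0,1]$), I would conclude $r(x_{n,i},G_{n,i})-rad_{Y_i}(G_{n,i})\to 0$ for every $i\in I$.

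Next I would apply property-$(P_2)$ in each factor. Since the component radii satisfy $rad_{Y_i}(G_{n,i})\le 1$ rather than being exactly $1$, I first upgrade the hypothesis through \Cref{prop:P_2Equi}: the standing requirement that $(Y_i,CB(X_i))$ has r.c.p. holds by the remark preceding that proposition, and the equivalence $(3)\Leftrightarrow(2)$ taken with $\alpha=1$ promotes property-$(P_2)$ from the family $\{rad_{Y_i}=1\}$ to $\{rad_{Y_i}\le 1\}$. Then \Cref{prop:p1p2}(2) applied to $(Y_i,\{F_i\in CB(X_i):rad_{Y_i}(F_i)\le 1\})$ furnishes, for each $i$, points $y_{n,i}\in Z_{Y_i}(G_{n,i})$ with $\|x_{n,i}-y_{n,i}\|\to 0$. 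Putting $y_n=(y_{n,i})_{i\in I}$, \Cref{prop: rcp l_p} gives $y_n\in Z_Y(G_n)$, and since $I$ is finite, $\|x_n-y_n\|^p=\sum_{i\in I}\|x_{n,i}-y_{n,i}\|^p\to 0$; by \Cref{prop:p1p2}(2), $(Y,\mathcal{F})$ has property-$(P_2)$.

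The first delicate point is the decoupling: a priori one controls only the aggregate excess $r(x_n,G_n)-rad_Y(G_n)$, and the inequality $(a-b)^p\le a^p-b^p$ is exactly what converts this into coordinatewise control. The second — and the reason finiteness of $I$ is indispensable — is the final assembly: the bound $\sum_{i\in I}\|x_{n,i}-y_{n,i}\|^p\to 0$ requires splitting a fixed $\epsilon$ as $\epsilon/N^{1/p}$ across the $N=|I|$ coordinates, which is legitimate only because there are finitely many and no tail survives. For infinite $I$ coordinatewise convergence does not sum to $\|x_n-y_n\|\to 0$, which is precisely the failure exhibited in \Cref{eg: p2}.
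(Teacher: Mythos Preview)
Your proof is correct and follows essentially the same route as the paper's: both invoke the remark before \Cref{eg: p2} for the converse, use the sequential characterization from \Cref{prop:p1p2}(2), decouple the coordinates via \Cref{prop:RadiusEquality_p} to obtain $r(x_{n,i},G_{n,i})^p-rad_{Y_i}(G_{n,i})^p\to 0$, upgrade the hypothesis from $\{rad_{Y_i}=1\}$ to $\{rad_{Y_i}\le 1\}$ through \Cref{prop:P_2Equi}, and finish by summing finitely many vanishing coordinate errors. Your explicit use of $(a-b)^p\le a^p-b^p$ to pass from vanishing $p$-th power differences to vanishing linear differences is a point the paper leaves implicit, but otherwise the arguments coincide.
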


\begin{proof}
It is enough to prove the necessary part. Clearly, by \Cref{prop: rcp l_p}, $(Y, \mathcal{F})$ has r.c.p. Let $(G_n)$ be a  sequence in $\mathcal{F}$ and $(w_n)$ be a sequence in $Y$ such that $r(w_n, G_n) - rad_Y(G_n) \to 0,$ where $G_n= \Pi_{i \in I} G_{n,i}$ and $w_n=(w_{n,i})_{i \in I}$ for every $n \in \mathbb{N}$. Let $i \in I.$ Since 
\begin{align*}
     0 &\leq r(w_{n, i}, G_{n,i})^p- rad_{Y_i}(G_{n,i})^p\\
     & = r(w_{n, i}, G_{n,i})^p + \sum_{j\in I {\setminus} \{i\}} rad_{Y_j}(G_{n,j})^p- \left(rad_{Y_i}(G_{n,i})^p + \sum_{j\in I {\setminus}  \{i\}} rad_{Y_j}(G_{n,j})^p\right)\\
     & \leq r(w_n, G_n)^p-rad_Y(G_n)^p,
\end{align*}
we have $r(w_{n,i}, G_{n,i})- rad_{Y_i}(G_{n,i}) \to 0$ as $n \to \infty$. Since $rad_{Y_i}(G_{n,i}) \leq 1$, by assumption and \Cref{prop:P_2Equi}, there exists $z_{n,i} \in Z_{Y_i}(G_{n,i})$ for every $n \in \mathbb{N}$ such that $\|w_{n, i}-z_{n, i}\| \to 0$ as $n \to \infty.$ Now for every $n \in \mathbb N,$ consider $z_n=(z_{n,i})_{i\in I} \in Y$ and observe that, by \Cref{prop: rcp l_p}, $z_n \in Z_Y(G_n)$. Clearly, we have $\|w_n-z_n\| \to 0.$ Thus, $(Y, \mathcal{F})$ has property-$(P_2)$. 
\end{proof}

The proof of the next result follows in the similar lines of the proof of \Cref{thm:pfds_P2}.

\begin{corollary}\label{thm:pfds_USP}
    Let $\{X_i :{i\in I}\}$ be a finite collection of Banach spaces, $Y_i$ be a subspace of $X_i$ for every $i\in I$ and $1\leq p < \infty.$ Let $X=(\oplus_pX_i)_{i \in I}$ and $Y=(\oplus_pY_i)_{i \in I}.$ Then $Y_i$ is uniformly strongly proximinal on $Y_i^\prime$ for every $i\in I$, where $Y'_i=\{x_i \in X_i: d(x_i, Y_i) =1\}$  if and only if $Y$ is uniformly strongly proximinal on $Y^\prime$, where $Y'=\{x \in X: d(x, Y) =1\}$. 
\end{corollary}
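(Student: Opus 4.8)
The plan is to regard this corollary as the singleton specialization of \Cref{thm:pfds_P2} and to reuse that argument. Writing $\mathcal{G}_i$ for the family of singletons $\{x_i\}$ with $x_i\in Y_i'$ and $\mathcal{G}$ for the singletons $\{x\}$ with $x\in Y'$, the statement ``$Y_i$ is uniformly strongly proximinal on $Y_i'$'' is precisely ``$(Y_i,\mathcal{G}_i)$ has property-$(P_2)$,'' and similarly for $Y$. Since $rad_{Y_i}(\{x_i\})=d(x_i,Y_i)$ and $Z_{Y_i}(\{x_i\})=P_{Y_i}(x_i)$, \Cref{prop:RadiusEquality_p} shows that a singleton $\{x\}=\Pi_{i\in I}\{x_i\}$ lies in $\mathcal{P}(X)$ with $rad_Y(\{x\})^p=\sum_{i\in I}d(x_i,Y_i)^p$; in particular $\{x\}\in\mathcal{G}$ forces $d(x_i,Y_i)\le 1$ for every $i$. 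As in \Cref{thm:pfds_P2}, the direction ``$(Y,\mathcal{G})$ has property-$(P_2)$ $\Rightarrow$ each $(Y_i,\mathcal{G}_i)$ has property-$(P_2)$'' is obtained by the coordinatewise construction used for the converse in \Cref{thm:pds_P1} (place the test point in the $j$-th coordinate and $0$ elsewhere), so I would only prove the forward implication.

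The only point not copied verbatim from \Cref{thm:pfds_P2} is the rescaling of radii: a singleton $\{x\}\in\mathcal{G}$ splits into factors $\{x_i\}$ whose radii $d(x_i,Y_i)$ need not equal $1$ but only satisfy $\sum_{i}d(x_i,Y_i)^p=1$, whereas the hypothesis concerns radius-exactly-$1$ singletons. To bridge this I would first record the singleton analogue of \Cref{prop:P_2Equi}. Because $Y_i$ is a subspace, dilation by $1/c$ carries a radius-$c$ singleton $\{x_i\}$ to the radius-$1$ singleton $\{x_i/c\}\in\mathcal{G}_i$ and satisfies $Z_{Y_i}(\{x_i\},\delta)=c\,Z_{Y_i}(\{x_i/c\},\delta/c)$; note that no translation is needed here, in contrast with the general-set case. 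Running the Case-(i)/Case-(ii) split of \Cref{prop:P_2Equi} (small radii via the triangle inequality, radii bounded below via this dilation) upgrades $(Y_i,\mathcal{G}_i)$-property-$(P_2)$ to property-$(P_2)$ for all singletons of radius $\le 1$, with a single $\delta$ valid uniformly. This uniform rescaling is the main (if modest) obstacle; everything after it is routine.

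With this in hand the assembly mirrors \Cref{thm:pfds_P2}. Proximinality of each $Y_i$ on $Y_i'$ dilates to proximinality on all of $X_i$, so \Cref{prop: rcp l_p} gives that $(Y,\mathcal{G})$ has r.c.p. Now take $\{x_n\}\in\mathcal{G}$ and $u_n=(u_{n,i})_{i\in I}\in Y$ with $\|u_n-x_n\|-d(x_n,Y)\to 0$; since $d(x_n,Y)=1$, the quantities $\|u_n-x_n\|$ stay bounded and hence $\|u_n-x_n\|^p-d(x_n,Y)^p\to0$. As in \Cref{thm:pfds_P2}, \Cref{prop:RadiusEquality_p} bounds each factorwise gap $\|u_{n,i}-x_{n,i}\|^p-d(x_{n,i},Y_i)^p$ by the total gap, whence $\|u_{n,i}-x_{n,i}\|-d(x_{n,i},Y_i)\to0$ for each $i$. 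Because $d(x_{n,i},Y_i)\le 1$, the rescaled factorwise property-$(P_2)$ supplies $z_{n,i}\in P_{Y_i}(x_{n,i})$ with $\|u_{n,i}-z_{n,i}\|\to0$, and \Cref{prop: rcp l_p} gives $z_n=(z_{n,i})_{i\in I}\in P_Y(x_n)$. Finiteness of $I$ then yields $\|u_n-z_n\|^p=\sum_{i\in I}\|u_{n,i}-z_{n,i}\|^p\to0$, so by \Cref{prop:p1p2} the pair $(Y,\mathcal{G})$ has property-$(P_2)$; that is, $Y$ is uniformly strongly proximinal on $Y'$. The finiteness of $I$ in the final sum is essential, matching the failure for infinite collections in \Cref{eg: p2}.
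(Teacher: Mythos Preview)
Your proposal is correct and follows essentially the same approach as the paper, which merely states that the proof runs along the lines of \Cref{thm:pfds_P2}. You have simply made explicit the singleton analogue of the rescaling step (the role played by \Cref{prop:P_2Equi} in that proof) and the coordinatewise embedding for the converse direction, both of which are implicit in the paper's one-line reference.
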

    
Let $V \in CL(X)$ and $\mathcal{F} \subseteq CB(X)$. Recall that if $(V, \mathcal{F})$ has property-$(P_1),$ then the restricted center map $Z_V$ is uHsc on $(\mathcal{F}, \mathcal{H})$. Also if $(V, \mathcal{F})$ has property-$(P_2),$ then the map  $Z_V$ is uniformly Hausdorff continuous on $(\mathcal{F}, \mathcal{H})$. It is interesting to check the existence of an intermediate notion that provides the Hausdorff continuity of restricted center map.  For this, we introduce the following notion.

\begin{definition}
    Let $V \in CL(X)$ and $\mathcal{F} \subseteq CB(X).$ We say that the pair $(V, \mathcal{F})$ has property-$(lP_2)$, if $(V, \mathcal{F})$ has r.c.p. and for every  $\epsilon>0$, $F \in \mathcal{F}$ there exists $\delta>0$ such that $Z_V(G, \delta) \subseteq Z_V(G)+\epsilon B_X$ whenever  $G \in \mathcal{F}$ and $\mathcal{H}(F, G)< \delta.$
\end{definition}

Note that, in the above definition, if $\mathcal{F}$ is a collection of all singleton subsets of $X$, then property-$(lP_2)$ is equivalent to locally uniform strong proximinality \cite{InPr2017a}, i.e., $(V, \mathcal{F})$ has property-$(lP_2)$ if and only if $V$ is locally uniformly strongly proximinal on $X$. Clearly,  $(V, \mathcal{F})$ has property-$(P_2) \Rightarrow (V, \mathcal{F})$ has property-$(lP_2) \Rightarrow (V, \mathcal{F})$ has property-$(P_1)$. However, the reverse implications are not true in general (see, \Cref{eg:_p1_lp1_p2}). 

In the following result, we establish a relation between property-$(P_1)$ and property-$(lP_2).$ This relation exhibits that property-$(lP_2)$ is sufficient for the Hausdorff continuity of the restricted center map.

\begin{theorem}\label{thrm: P_1 lP_2 lHsc}
    Let $V \in CL(X)$ and $\mathcal{F} \subseteq CB(X).$ Then the following statements are equivalent.
    \begin{enumerate}
    \item $(V, \mathcal{F})$ has property-$(lP_2).$
    
    \item For every $\epsilon>0$ and $F \in \mathcal{F}$ there exists $\delta>0$ such that $Z_V(F, \delta) \subseteq Z_V(G)+ \epsilon B_X$ whenever $G \in \mathcal{F}$ and $\mathcal{H}(F,G)< \delta.$ 
    
    \item $(V, \mathcal{F})$ has property-$(P_1)$ and the map $Z_V$ is lHsc on $(\mathcal{F}, \mathcal{H})$.
    \end{enumerate}
\end{theorem}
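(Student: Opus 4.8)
The plan is to prove the cycle of implications $(1)\Rightarrow(2)\Rightarrow(3)\Rightarrow(1)$, exploiting the sequential characterization of property-$(P_1)$ from \Cref{prop:p1p2} and the definition of lHsc. The technical content is essentially the triangle-inequality interplay between the ``enlargement'' sets $Z_V(F,\delta)$ under perturbation of $F$ in the Hausdorff metric. The key elementary fact I would establish at the outset is a \emph{continuity estimate for the restricted radius}: for $F, G \in CB(X)$ and any $v \in V$, one has $|r(v,F) - r(v,G)| \leq \mathcal{H}(F,G)$, and consequently $|rad_V(F) - rad_V(G)| \leq \mathcal{H}(F,G)$. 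This is immediate from the definitions of $r(\cdot,\cdot)$ and $\mathcal{H}$, and it is what lets me pass between $Z_V(F,\cdot)$ and $Z_V(G,\cdot)$ when $\mathcal{H}(F,G)$ is small.

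For $(1)\Rightarrow(2)$: I would fix $\epsilon>0$ and $F\in\mathcal{F}$, apply (1) to get a $\delta_0$ controlling $Z_V(G,\delta_0)\subseteq Z_V(G)+\epsilon B_X$ for $\mathcal{H}(F,G)<\delta_0$, and then choose $\delta\leq\delta_0/3$ (say) so that for $\mathcal{H}(F,G)<\delta$ and $v\in Z_V(F,\delta)$, the radius estimate above forces $r(v,G)\leq r(v,F)+\mathcal{H}(F,G)\leq rad_V(F)+\delta+\delta \leq rad_V(G)+3\delta\leq rad_V(G)+\delta_0$, so that $v\in Z_V(G,\delta_0)\subseteq Z_V(G)+\epsilon B_X$. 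For $(2)\Rightarrow(3)$, property-$(P_1)$ follows by taking $G=F$ in (2), which collapses the statement to the defining inclusion $Z_V(F,\delta)\subseteq Z_V(F)+\epsilon B_X$. The lHsc conclusion follows by taking any $w\in Z_V(F)\subseteq Z_V(F,\delta)$ in (2), which yields $Z_V(F)\subseteq Z_V(G)+\epsilon B_X$ for $\mathcal{H}(F,G)<\delta$, precisely the lower Hausdorff semi-continuity condition.

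The implication I expect to be the main obstacle is $(3)\Rightarrow(1)$, since here I must recombine the qualitative property-$(P_1)$ (via minimizing sequences) with the quantitative lHsc hypothesis into a single uniform-in-$G$-but-local-in-$F$ estimate, and the natural route is by contradiction through sequences. Assuming (1) fails, I would extract $\epsilon>0$, a fixed $F\in\mathcal{F}$, sequences $G_n\to F$ in $\mathcal{H}$, and points $v_n\in Z_V(G_n,1/n)$ with $d(v_n, Z_V(G_n))\geq\epsilon$. Using the radius estimate, $r(v_n,F)\leq r(v_n,G_n)+\mathcal{H}(F,G_n)\leq rad_V(G_n)+1/n+\mathcal{H}(F,G_n)\to rad_V(F)$, so $(v_n)$ is a minimizing sequence for $F$; property-$(P_1)$ then supplies $w_n\in Z_V(F)$ with $\|v_n-w_n\|\to 0$. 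Finally lHsc at $F$ gives, for large $n$, a point in $Z_V(G_n)$ within $\epsilon/2$ of $w_n$, hence within $\epsilon$ of $v_n$ once $\|v_n-w_n\|$ is small, contradicting $d(v_n,Z_V(G_n))\geq\epsilon$. The delicate point throughout is keeping the two $\delta$'s straight: property-$(P_1)$ is applied at the single limiting set $F$, whereas the enlargement inclusion must hold at the nearby sets $G_n$, and it is exactly the lHsc hypothesis that bridges $Z_V(F)$ back to $Z_V(G_n)$ to close the contradiction.
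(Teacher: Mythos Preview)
Your argument is correct. The implications $(1)\Rightarrow(2)$ and $(2)\Rightarrow(3)$ are handled exactly as in the paper (the paper simply writes ``Obvious'' for the latter, while you spell out the two specializations $G=F$ and $w\in Z_V(F)\subseteq Z_V(F,\delta)$).

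For $(3)\Rightarrow(1)$ you take a genuinely different route. You argue by contradiction and sequences: extract $G_n\to F$ and $v_n\in Z_V(G_n,1/n)$ far from $Z_V(G_n)$, use the radius estimate to see that $(v_n)$ is minimizing for $F$, invoke the sequential form of property-$(P_1)$ (\Cref{prop:p1p2}) to produce $w_n\in Z_V(F)$ with $\|v_n-w_n\|\to 0$, and then use lHsc to pull $w_n$ back near $Z_V(G_n)$. The paper instead gives a direct, fully quantitative proof: it fixes $\delta'$ witnessing both $Z_V(F,\delta')\subseteq Z_V(F)+\tfrac{\epsilon}{2}B_X$ (property-$(P_1)$ at $F$) and $Z_V(F)\subseteq Z_V(A)+\tfrac{\epsilon}{2}B_X$ for $\mathcal{H}(F,A)<\delta'$ (lHsc), sets $\delta=\delta'/3$, and for $\mathcal{H}(F,G)<\delta$ chains the inclusions
\[
Z_V(G,\delta)\subseteq Z_V(F,\delta')\subseteq Z_V(F)+\tfrac{\epsilon}{2}B_X\subseteq Z_V(G)+\epsilon B_X,
\]
the first inclusion coming from the same $|r(v,F)-r(v,G)|\leq\mathcal{H}(F,G)$ estimate you already use. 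Your route is perfectly valid but slightly heavier, since it imports \Cref{prop:p1p2}; the paper's direct argument shows the implication is really no harder than $(1)\Rightarrow(2)$ and yields an explicit $\delta$.
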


\begin{proof}
    $(1) \Rightarrow (2)$: Let $\epsilon>0$ and $F \in \mathcal{F}$. By $(1)$, there exists $\delta>0$ such that  $Z_V(G, \delta) \subseteq Z_V(G)+ \epsilon B_X$ whenever $G \in \mathcal{F}$ and $\mathcal{H}(F,G)< \delta.$ Choose $\delta_1= \frac{\delta}{3}$. Let  $G \in \mathcal{F}$ and $\mathcal{H}(F,G)< \delta_1$.  Since for any $v \in Z_V(F, \delta_1)$ we have 
    \[ r(v, G) \leq r(v, F)+\mathcal{H}(F, G) < rad_V(F)+2\delta_1 < rad_V(G)+3\delta_1,\]
    it follows that $Z_V(F, \delta_1) \subset Z_V(G, \delta).$ Therefore, we have  $Z_V(F, \delta_1) \subseteq Z_V(G)+ \epsilon B_X$.\\
    $(2) \Rightarrow (3)$: Obvious.\\
    $(3) \Rightarrow (1)$: Let $\epsilon >0$ and $F \in \mathcal{F}.$ By our assumption, there exists $\delta' >0$ such that $Z_V(F, \delta') \subseteq Z_V(F)+\frac{\epsilon}{2} B_X$ and $Z_V(F) \subseteq Z_V(A)+\frac{\epsilon}{2} B_X$ for every $A \in \mathcal{F}$ with $\mathcal{H}(F, A)<\delta'.$ For $\delta =\frac{\delta^\prime}{3},$ we claim that $Z_V(G, \delta) \subseteq Z_V(G)+\epsilon B_X$ for any $G\in \mathcal{F}$ with $\mathcal{H}(F, G)<\delta.$ Let $v \in Z_V(G, \delta).$ Since
    \[ r(v, F) \leq r(v, G)+\mathcal{H}(F, G) < rad_V(G)+2\delta < rad_V(F)+3\delta,\]
    we have $v \in  Z_V(F,\delta^\prime).$ Thus, \[Z_V(G, \delta) \subseteq Z_V(F, \delta^\prime) \subseteq Z_V(F)+\frac{\epsilon}{2} B_X \subseteq Z_V(G)+\epsilon B_X.\]
    Hence, $(V, \mathcal{F})$ has property-$(lP_2).$
\end{proof}

\begin{corollary}
    Let $V \in CL(X)$. Then $V$  is strongly proximinal on $X$ and the map $P_V$ is lHsc on $X$ if and only if  $V$  is locally uniformly strongly proximinal on $X$.
\end{corollary}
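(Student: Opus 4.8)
The plan is to derive this corollary as the singleton specialization of \Cref{thrm: P_1 lP_2 lHsc}. First I would take $\mathcal{F}$ to be the collection of all singleton subsets of $X$ and identify each $x \in X$ with $\{x\} \in \mathcal{F}$. Under this identification the Hausdorff metric on $\mathcal{F}$ reduces to the norm, since $\mathcal{H}(\{x\}, \{y\}) = \|x-y\|$, and the restricted center map $Z_V$ acts as $Z_V(\{x\}) = P_V(x)$. Consequently, lower Hausdorff semi-continuity of $Z_V$ on $(\mathcal{F}, \mathcal{H})$ is precisely lower Hausdorff semi-continuity of $P_V$ on $X$, so the two continuity notions coincide under the identification.

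Next I would translate the proximinality hypotheses into the language of restricted center properties, using the dictionary recalled in the introduction: $V$ is strongly proximinal on $X$ if and only if $(V, \mathcal{F})$ has property-$(P_1)$, and, by the remark following the definition of property-$(lP_2)$, $V$ is locally uniformly strongly proximinal on $X$ if and only if $(V, \mathcal{F})$ has property-$(lP_2)$. With these identifications the two sides of the claimed equivalence become statements about $(V, \mathcal{F})$ for this particular $\mathcal{F}$.

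With this in place, the corollary is exactly the equivalence $(1) \Leftrightarrow (3)$ of \Cref{thrm: P_1 lP_2 lHsc} applied to the singleton collection: condition $(3)$ there reads ``$(V, \mathcal{F})$ has property-$(P_1)$ and $Z_V$ is lHsc on $(\mathcal{F}, \mathcal{H})$'', which translates to ``$V$ is strongly proximinal on $X$ and $P_V$ is lHsc on $X$'', while condition $(1)$ reads ``$(V, \mathcal{F})$ has property-$(lP_2)$'', which translates to ``$V$ is locally uniformly strongly proximinal on $X$''. There is no substantial obstacle in this argument; the only point requiring a word of justification is that the Hausdorff metric agrees with the norm metric on singletons, which is immediate, so the whole proof is a direct specialization of the preceding theorem.
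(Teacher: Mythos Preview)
Your proposal is correct and follows exactly the route the paper intends: the corollary is stated immediately after \Cref{thrm: P_1 lP_2 lHsc} with no separate proof, since it is precisely the specialization of the equivalence $(1)\Leftrightarrow(3)$ to the collection of singletons, using the identifications you spell out.
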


The following example reveals that the property-$(lP_2)$ is stronger than property-$(P_1)$ but weaker to property-$(P_2).$ We say a space $X$ is uniformly rotund in every direction (in short, URED) \cite{AmZi1980}, if $X$ is uniformly rotund with respect to every one dimensional subspace of $X$. 

\begin{example}\label{eg:_p1_lp1_p2}
    \begin{enumerate}
        \item Consider $X=(\mathbb{R}^3, \|\cdot\|_c),$ where $\|x\|_c=|x_1|+(x_2^2+ x_3^2)^{\frac{1}{2}}$ for any $x=(x_1, x_2, x_3) \in \mathbb{R}^3$ and $Y= span \{(1,1,0)\}.$ Note that, from \cite[Page 193]{DeKe1983}, the metric projection $P_Y$ is not lHsc on $X.$ Thus, $Z_Y$ is not lHsc on $CB(X).$ Therefore, by \Cref{thrm: P_1 lP_2 lHsc}, $(Y,CB(X))$ does not have property-$(lP_2).$ However, by \cite[Proposition 1]{Mach1980}, $(Y, CB(X))$ has property-$(P_1).$
        \item Consider a rotund space $X$ which is not URED (see, \cite[Example 1]{Smit1978a}).  Then there exists a one dimensional subspace $Y$ of $X$ such that $X$ is not uniformly rotund with respect to $Y.$  By \cite[Proposition 1]{Mach1980}, $(Y,\mathcal{F})$ has property-$(P_1)$, where $\mathcal{F}=\{F \in K(X): rad_Y(F)=1\}.$ Since $X$ is rotund, by \cite{Gark1962}, $Z_Y(A)$ is singleton for every $A \in \mathcal{F}.$  Thus, $Z_Y$ is Hausdorff continuous on $(\mathcal{F}, \mathcal{H}).$ Therefore, by \Cref{thrm: P_1 lP_2 lHsc}, $(Y,\mathcal{F})$ has property-$(lP_2).$ However, from \Cref{lem: UR w.r.t. Y}, it follows that $(Y,\mathcal{F})$ does not have property-$(P_2).$ 
        \end{enumerate}
\end{example}

 Whenever $X$ is rotund, $V \in CC(X)$ and $\mathcal{F} \subseteq K(X),$ it follows from \cite[Lemma 1.2]{AmZi1980} and \Cref{thrm: P_1 lP_2 lHsc} that $(V, \mathcal{F})$ has property-$(P_1)$ if and only if $(V, \mathcal{F})$ has property-$(lP_2)$. However, in \Cref{eg:_p1_lp1_p2}, notice that even in finite dimensional spaces, property-$(P_1)$ and property-$(lP_2)$ are not equivalent on $CB(X).$ In the following result, we observe that property-$(P_1)$ and property-$(lP_2)$ coincide for a large class of spaces.

\begin{proposition}\label{prop: URED}
  Let $X$ be a URED space. Then the following statements hold.
        \begin{enumerate}
            \item Let $C \in CC(X)$, $\mathcal{F} \subseteq CB(X)$. Then $(C,\mathcal{F})$ has property-$(P_1)$ if and only if   $(C,\mathcal{F})$ has property-$(lP_2).$ 
            \item Let $Y$ be any finite dimensional subspace of $X$. Then $(Y, \mathcal{F}')$ has property-$(P_2)$, where $\mathcal{F}'=\{F \in CB(X): rad_Y(F)=1\}.$
        \end{enumerate}
  \end{proposition}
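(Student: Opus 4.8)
\textbf{Proof proposal for Proposition~\ref{prop: URED}.}

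The plan is to treat the two parts separately, since part~(1) is a statement about the equivalence of property-$(P_1)$ and property-$(lP_2)$ for convex sets in a URED space, while part~(2) is a property-$(P_2)$ statement for finite dimensional subspaces. For part~(1), the direction ``property-$(lP_2)\Rightarrow$ property-$(P_1)$'' is free, as already noted in the excerpt. The substance is the reverse implication. By \Cref{thrm: P_1 lP_2 lHsc}, it suffices to show that whenever $(C,\mathcal{F})$ has property-$(P_1)$ and $C$ is convex, the restricted center map $Z_C$ is lower Hausdorff semi-continuous on $(\mathcal{F},\mathcal{H})$. So the plan is to fix $F\in\mathcal{F}$, take a point $v\in Z_C(F)$, and a sequence $G_n\to F$ in the Hausdorff metric, and produce points $v_n\in Z_C(G_n)$ with $v_n\to v$. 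The mechanism is URED: in a URED space, restricted centers of bounded sets relative to a convex set behave rigidly, and I would lean on \cite[Lemma~1.2]{AmZi1980} (quoted in the excerpt as governing the rotund, convex, compact case) to control how centers move. Concretely, $\mathcal{H}(F,G_n)\to 0$ forces $rad_C(G_n)\to rad_C(F)$ and makes any restricted center of $G_n$ a near-minimizer for $F$; URED convexity then pins these near-minimizers close to $Z_C(F)$, giving the lHsc conclusion.

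For part~(2), I would reduce to the equivalences already assembled in the preliminaries. Since $Y$ is finite dimensional, I want to show $X$ is uniformly rotund with respect to $Y$, because by \Cref{lem: UR w.r.t. Y} that is equivalent to $(Y,\mathcal{F}')$ having property-$(P_2)$ together with singleton-valued centers. A one dimensional subspace version is immediate from the URED hypothesis, so the task is to promote uniform rotundity with respect to each one dimensional subspace to uniform rotundity with respect to the whole finite dimensional $Y$. The natural route is an induction on $\dim Y$, or a direct compactness argument: given sequences $(x_n),(y_n)$ in $S_X$ with $x_n-y_n\in Y$ and $\left\Vert\frac{x_n+y_n}{2}\right\Vert\to 1$, the differences $x_n-y_n$ lie in the finite dimensional $Y$, so after normalizing and passing to a subsequence their directions converge to some fixed unit vector spanning a one dimensional subspace $Y_0\subseteq Y$; URED applied in the direction $Y_0$ then forces $x_n-y_n\to 0$.

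The main obstacle I anticipate is in part~(1): carefully extracting lHsc of $Z_C$ from the URED hypothesis for an \emph{arbitrary} bounded family $\mathcal{F}\subseteq CB(X)$ (not just compact sets or singletons), while only assuming property-$(P_1)$ and convexity of $C$. The clean statements I can cite (\cite[Lemma~1.2]{AmZi1980}) are phrased for the rotund/compact setting, so I must verify that the URED modulus gives \emph{uniform} control of the displacement of centers that is strong enough for the general $\mathcal{F}$; this is where the genuine work lies, and I would isolate it as a short lemma: in a URED space, if $C$ is convex and $v\in Z_C(F)$, then for $G$ close to $F$ in $\mathcal{H}$ every restricted center of $G$ lies near $v$. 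Part~(2) is essentially bookkeeping once the one dimensional-to-finite dimensional promotion of uniform rotundity is done, and the compactness argument there is routine because $Y$ is finite dimensional.
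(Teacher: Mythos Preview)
Your plan for part~(2) matches the paper's exactly: reduce to showing that $X$ is uniformly rotund with respect to the finite dimensional subspace $Y$ and then invoke \Cref{lem: UR w.r.t. Y}. The paper simply cites \cite[Proposition~1]{Zizl1972} for the implication ``URED $+$ $\dim Y<\infty$ $\Rightarrow$ UR w.r.t.\ $Y$'', whereas you sketch a direct compactness argument; your sketch is fine in spirit, though the normalization/perturbation step (replacing $z_n$ by the limit direction $z_0$ while keeping everything on $S_X$) needs a line or two of care.

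For part~(1), however, you are working much harder than necessary and your ``concrete'' argument actually proves the wrong semi-continuity. You observe that any $v_n\in Z_C(G_n)$ is a near-minimizer for $F$ and then use property-$(P_1)$ (or URED) to force $v_n$ close to $Z_C(F)$; that is precisely upper Hausdorff semi-continuity, $Z_C(G_n)\subseteq Z_C(F)+\epsilon B_X$, not the lower semi-continuity you need. Your proposed lemma at the end---``every restricted center of $G$ lies near $v$'' for a \emph{fixed} $v\in Z_C(F)$---only makes sense if $Z_C(F)$ is already a singleton, and that is exactly the point you are missing.

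The paper's route is a one-liner: URED implies rotundity, and by Garkavi \cite{Gark1962} a rotund space with a closed convex $C$ has $Z_C(A)$ a singleton for every $A\in CB(X)$ (not just compact $A$). Once $Z_C$ is singleton-valued, property-$(P_1)$ gives uHsc of $Z_C$ by \cite[Theorem~5]{Mach1980}, and for singleton-valued maps uHsc, lHsc and Hausdorff continuity coincide; then \Cref{thrm: P_1 lP_2 lHsc} finishes. So the ``genuine work'' you anticipate with the URED modulus is unnecessary: the full directional uniformity of URED is irrelevant for part~(1), and ordinary rotundity plus Garkavi's uniqueness result dissolves your obstacle entirely.
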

\begin{proof}
$(1)$: Let $(C,\mathcal{F})$ has property-$(P_1)$. By \cite{Gark1962}, $Z_C(A)$ is singleton for every $A \in \mathcal{F}.$ Therefore, by \cite[Theorem 5]{Mach1980} and \Cref{thrm: P_1 lP_2 lHsc},  $(C,\mathcal{F})$ has property-$(lP_2).$\\
$(2)$: Since $X$ is URED and $dim(Y)< \infty$, by \cite[Proposition 1]{Zizl1972}, it follows that $X$ is uniformly rotund with respect to $Y$. Therefore, by \Cref{lem: UR w.r.t. Y}, $(Y, \mathcal{F}')$ has property-$(P_2)$.
\end{proof}

In general, for hyperplane $H$ of $X$, from \cite[Proposition 2.6]{DuST2017} it follows that the notions strong proximinality, locally uniform strong proximinality and uniform strong proximinality coincide. However the following example reveals that the properties-$(P_1)$, $(lP_2)$ and $(P_2)$ do not coincide for hyperplanes.  We say that a Banach space $X$ has Kadets-Klee property if $x \in S_X$ and $(x_n)$ is a sequence in $S_X$ with $x_n \xrightarrow{w} x$, it follows that $x_n \to x.$ 
    
\begin{example}\label{eg: lP_2 P_2}
\begin{enumerate}
    \item Let $W$ be any finite dimensional space such that the  center map $Z_W$ is not Hausdorff  continuous on $CB(W)$ \cite[Example 2.5]{AmMS1982}. Consider $X=W \oplus_2 \mathbb{R}$ and $Y= W \oplus_2 \{0\}.$ Clearly, the restricted center map $Z_Y$ is not Hausdorff continuous on $CB(X)$. Therefore, by \Cref{thrm: P_1 lP_2 lHsc}, $(Y, CB(X))$ does not have property-$(lP_2).$ However, by \cite[Proposition 1]{Mach1980}, $(Y, CB(X))$ has property-$(P_1).$

    \item Consider a non uniformly rotund space $M$ which is rotund, reflexive and has  Kadets-Klee property (see, \cite[Example 1]{Smit1978a}). Let $X= M \oplus_2 \mathbb{R}$ and $H=M \oplus_2 \{0\}.$ Clearly, $X$ is rotund, reflexive and has Kadets-Klee property. Thus, by \cite[Theorem 2.3]{LPST2017} and \cite[Lemma 1.2]{AmZi1980}, it follows that $(H, K(X))$ has property-$(P_1)$ and $Z_H(A)$ is singleton for every $A \in K(X).$ Further, by \cite[Theorem 5]{Mach1980} and \Cref{thrm: P_1 lP_2 lHsc}, $(H, K(X))$ has property-$(lP_2).$ Since $X$ is not uniformly rotund with respect to $H,$ by \Cref{lem: UR w.r.t. Y}, $(H, \mathcal{F})$ does not have  property-$(P_2)$, where $\mathcal{F}=\{F \in K(X): rad_H(F) \leq 1\}$.
\end{enumerate}
\end{example}

\Cref{eg:_p1_lp1_p2,eg: lP_2 P_2} further reveals that, in general,  properties-$(lP_2)$ and $(P_2)$ do not coincide in infinite dimensional spaces even when the subspaces are finite dimensional or finite co-dimensional.  However, the following result illustrates that in  finite dimensional spaces, the properties-$(lP_2)$ and $(P_2)$ coincide. 

\begin{proposition}\label{prop: compact lP_2}
    Let $V \in CL(X)$ and $\mathcal{F}$ be a compact subset of $(CB(X), \mathcal{H})$. Then  $(V, \mathcal{F})$ has property-$(lP_2)$ if and only if $(V, \mathcal{F})$ has property-$(P_2)$.
\end{proposition}

\begin{proof}
    It is enough to prove the necessary part. Let $\epsilon>0$. By assumption, for every $F \in \mathcal{F}$ there exists $\delta_F>0$ such that $Z_V(G, \delta_F) \subseteq Z_V(G)+ \epsilon B_X$ whenever $G \in \mathcal{F}$ and $\mathcal{H}(F,G)< \delta_F.$ Since $\{ B(F, \delta_F): F \in \mathcal{F}\}$ is an open cover for  $\mathcal{F}$, there exist $F_1, F_2, \dots, F_n \in \mathcal{F}$ such that $\mathcal{F} \subseteq  \cup_{i=1}^{n} B(F_i, \delta_{F_i})$. Choose $\delta= \min\{\delta_{F_i}: 1 \leq i \leq n\}.$ Let $F \in \mathcal{F}.$ Then $F \in B(F_j, \delta_{F_j})$ for some $1 \leq j \leq n.$ Therefore, $Z_V(F, \delta) \subseteq Z_V(F, \delta_{F_j}) \subseteq Z_V(F)+ \epsilon B_X.$ Hence the proof. 
\end{proof}

\begin{corollary}\label{cor: finite lP2 P_2}
Let $X$ be a finite dimensional space and  $Y$ be a subspace of $X$. Then $(Y, CB(X))$ has property-$(lP_2)$ if and only if $(Y, \mathcal{F})$ has property-$(P_2)$, where $\mathcal{F}=\{F \in CB(X): rad_Y(F)=1\}$.
\end{corollary}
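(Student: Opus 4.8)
The plan is to deduce the corollary from \Cref{prop: compact lP_2} and \Cref{prop:P_2Equi}, the only genuinely new ingredient being the compactness of a suitable normalized family, which is where finite-dimensionality enters.

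For the forward implication, suppose $(Y, CB(X))$ has property-$(lP_2)$ and consider the normalized family $\mathcal{F}_1=\{F\in CB(X): rad_Y(F)=r(0,F)=1\}$ appearing in \Cref{prop:P_2Equi}. First I would observe that property-$(lP_2)$ passes to subfamilies: since its defining condition only quantifies over members $G$ of the family lying within $\delta$ of $F$, restricting from $CB(X)$ to $\mathcal{F}_1$ keeps the same $\delta$ valid, so $(Y,\mathcal{F}_1)$ has property-$(lP_2)$. The key step is then to show $\mathcal{F}_1$ is compact in $(CB(X),\mathcal{H})$. Because $r(0,F)=1$ forces $F\subseteq B_X$, every member of $\mathcal{F}_1$ lies inside the unit ball, which is compact since $\dim X<\infty$; the Blaschke selection theorem then makes the space of nonempty compact subsets of $B_X$ compact, and as both $F\mapsto rad_Y(F)$ and $F\mapsto r(0,F)$ are $1$-Lipschitz with respect to $\mathcal{H}$, the conditions $rad_Y(F)=1$ and $r(0,F)=1$ are closed, so $\mathcal{F}_1$ is a closed, hence compact, subset. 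Now \Cref{prop: compact lP_2} upgrades property-$(lP_2)$ to property-$(P_2)$ on $\mathcal{F}_1$, and \Cref{prop:P_2Equi} (with $\alpha=1$, implication $(1)\Rightarrow(3)$) transports this to $(Y,\mathcal{F})$ with $\mathcal{F}=\{F: rad_Y(F)=1\}$. Here I would note that the standing hypothesis of \Cref{prop:P_2Equi}, namely that $(Y,CB(X))$ has r.c.p., holds automatically in finite dimensions, since a minimizing sequence for any $F\in CB(X)$ in $Y$ is bounded and therefore has a convergent subsequence.

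For the converse I do not expect to need finite-dimensionality at all. Assuming $(Y,\mathcal{F})$ has property-$(P_2)$, I would apply \Cref{prop:P_2Equi} in the direction $(3)\Rightarrow(1)$ to reach the $\alpha$-free statement $(1)$, and then $(1)\Rightarrow(2)$ for an arbitrary parameter to conclude that $(Y,\{F: rad_Y(F)\leq\alpha\})$ has property-$(P_2)$ for every $\alpha>0$. To verify property-$(lP_2)$ on all of $CB(X)$, fix $F\in CB(X)$ and $\epsilon>0$, set $\alpha=rad_Y(F)+1$, and let $\delta_0$ be the uniform constant provided by property-$(P_2)$ on $\{rad_Y\leq\alpha\}$ for this $\epsilon$. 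Taking $\delta=\min\{\delta_0,1\}$, any $G$ with $\mathcal{H}(F,G)<\delta$ satisfies $rad_Y(G)\leq rad_Y(F)+\mathcal{H}(F,G)<\alpha$ by the Lipschitz estimate, so $G$ lies in the family and $Z_Y(G,\delta)\subseteq Z_Y(G,\delta_0)\subseteq Z_Y(G)+\epsilon B_X$, which is exactly property-$(lP_2)$ at $F$. The r.c.p. of $(Y,CB(X))$ comes for free since $CB(X)=\bigcup_{\alpha>0}\{F: rad_Y(F)\leq\alpha\}$.

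The main obstacle, and the only place where $\dim X<\infty$ is indispensable, is establishing the compactness of $\mathcal{F}_1$ in the forward direction; everything else is formal bookkeeping with the two cited propositions. This is consistent with \Cref{eg:_p1_lp1_p2} and \Cref{eg: lP_2 P_2}, where in infinite dimensions property-$(lP_2)$ and property-$(P_2)$ genuinely differ, so the forward implication must fail there; correspondingly the normalized family is no longer compact and \Cref{prop: compact lP_2} cannot be invoked.
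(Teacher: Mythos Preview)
Your proposal is correct and follows essentially the same route as the paper: reduce to the normalized family $\mathcal{F}_1=\{F:rad_Y(F)=r(0,F)=1\}$, use compactness of $B_X$ (the paper cites \cite[Theorem~3.2.4]{Beer1993}, you invoke Blaschke selection---same content) together with closedness of $\mathcal{F}_1$ to apply \Cref{prop: compact lP_2}, and then transfer via \Cref{prop:P_2Equi}. Your treatment of the converse is more explicit than the paper's (which just says ``easy to verify using \Cref{prop:P_2Equi}''), but the underlying idea is identical.
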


\begin{proof}
   Let $(Y, CB(X))$ has property-$(lP_2)$. By \Cref{prop:P_2Equi}, it is enough to prove that $(Y, \mathcal{F}')$ has property-$(P_2)$, where $\mathcal{F}'=\{F \in CB(X): rad_Y(F)= r(0,F)=1\}$. Observe that $\mathcal{F}'$ is a closed subset of $CB(B_X).$ Since $B_X$ is compact, by \cite[Theorem 3.2.4]{Beer1993}, $(CB(B_X), \mathcal{H})$ is compact subset of $(CB(X), \mathcal{H})$. Therefore, $\mathcal{F}'$ is compact subset of $(CB(X), \mathcal{H})$. Thus, by \Cref{prop: compact lP_2}, $(Y, \mathcal{F}')$ has property-$(P_2)$. The converse is easy to verify using \Cref{prop:P_2Equi}. Hence the proof.
\end{proof}

The following corollary is a consequence of \Cref{lem: QUR,thrm: P_1 lP_2 lHsc}, and \Cref{cor: finite lP2 P_2}. The result also reveals that the lower Hausdorff semi-continuity of restricted center map is sufficient for the quasi uniform rotundity whenever the space is of finite dimension.   
\begin{corollary}
    Let $X$ be a finite dimensional space and  $Y$ be a subspace of $X$. If $Z_Y$ is lHsc on $(CB(X), \mathcal{H})$, then $Z_Y$ is uniformly Hausdorff continuous on $(\mathcal{F}, \mathcal{H})$, where $\mathcal{F}=\{F \in CB(X): rad_Y(F)\leq 1\}$. In particular, $X$ is quasi uniformly rotund with respect to $Y$.  
\end{corollary}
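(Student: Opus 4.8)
The plan is to chain together the equivalences already established in the preceding results, the only genuinely new input being that finite dimensionality supplies property-$(P_1)$ and the restricted center property for free. First I would record these two facts. Since $X$ is finite dimensional, every $F \in CB(X)$ is compact, the map $v \mapsto r(v,F)$ is continuous on $Y$, and $r(v,F) \to \infty$ as $\|v\| \to \infty$ because $F$ is bounded; hence the infimum defining $rad_Y(F)$ is attained, so $(Y, CB(X))$ has r.c.p.\ and each $Z_Y(F)$ is compact. Moreover every minimizing sequence for $F$ in $Y$ is bounded, hence has a convergent subsequence whose limit (by continuity of $r(\cdot,F)$) lies in $Z_Y(F)$; a standard contradiction argument against the $\epsilon$--$\delta$ formulation then shows that $(Y, CB(X))$ has property-$(P_1)$. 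Alternatively, this last fact is precisely \cite[Proposition 1]{Mach1980}, which is already invoked for the same purpose elsewhere in the paper.

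Next I would combine this with the hypothesis. As $(Y, CB(X))$ has property-$(P_1)$ and, by assumption, $Z_Y$ is lHsc on $(CB(X), \mathcal{H})$, the implication $(3) \Rightarrow (1)$ of \Cref{thrm: P_1 lP_2 lHsc}, applied with the family taken to be all of $CB(X)$, yields that $(Y, CB(X))$ has property-$(lP_2)$.

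I would then pass to the normalized families. By \Cref{cor: finite lP2 P_2}, property-$(lP_2)$ of $(Y, CB(X))$ is equivalent to property-$(P_2)$ of the pair $(Y, \{F \in CB(X) : rad_Y(F) = 1\})$. Feeding this into the equivalence $(3) \Leftrightarrow (2)$ of \Cref{prop:P_2Equi} with $\alpha = 1$ upgrades it to property-$(P_2)$ of $(Y, \mathcal{F})$ on the full target family $\mathcal{F} = \{F \in CB(X) : rad_Y(F) \leq 1\}$. Finally \Cref{lem: QUR}, again with $\alpha = 1$, closes the argument: from property-$(P_2)$ of $(Y,\mathcal{F})$ its implication $(2) \Rightarrow (3)$ gives that $Z_Y$ is non-empty valued and uniformly Hausdorff continuous on $(\mathcal{F}, \mathcal{H})$, while $(2) \Rightarrow (1)$ gives that $X$ is quasi uniformly rotund with respect to $Y$, which is the final assertion.

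The main obstacle I anticipate is not any single deep step but the bookkeeping of the successive normalized collections: one must check at each transition that the hypotheses of the cited equivalences hold on exactly the right family ($CB(X)$, then $\{rad_Y(F)=1\}$, then $\{rad_Y(F)\le 1\}$), and in particular that the pair $(Y,\cdot)$ retains r.c.p.\ throughout so that \Cref{prop:P_2Equi} and \Cref{lem: QUR} are applicable. This is exactly where finite dimensionality is used, since it guarantees the non-emptiness and compactness of the restricted centers and the validity of property-$(P_1)$; once these are in place the remainder is a purely formal concatenation of the earlier theorems.
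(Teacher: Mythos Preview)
Your proposal is correct and follows essentially the same route as the paper, which simply states that the corollary is a consequence of \Cref{lem: QUR}, \Cref{thrm: P_1 lP_2 lHsc}, and \Cref{cor: finite lP2 P_2}. You have merely made explicit the two auxiliary ingredients that the paper leaves implicit: the appeal to \cite[Proposition 1]{Mach1980} for property-$(P_1)$ in finite dimensions, and the use of \Cref{prop:P_2Equi} to pass from the family $\{rad_Y(F)=1\}$ to $\{rad_Y(F)\le 1\}$.
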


In view of \Cref{thm:pds_P1,thm:pfds_P2} and  \Cref{eg: p2}, it is reasonable to investigate the stability of property-$(lP_2)$ under $\ell_p$-direct sum. To examine this, it is enough to check, by \Cref{thrm: P_1 lP_2 lHsc}, the stability of continuity properties of the restricted center map.

\begin{theorem}\label{thrm: lHsc P_1}
Let $\{X_i: {i\in \mathbb{N}}\}$ be a collection of Banach spaces, $Y_i$ be a  subspace of $X_i$ such that $(Y_i,CB(X_i))$ has r.c.p. for every $i\in \mathbb{N}$ and $1\leq p < \infty.$ Let $X=(\oplus_pX_i)_{i \in  \mathbb{N}}$ and $Y=(\oplus_pY_i)_{i \in  \mathbb{N}}$. Then $Z_{Y_i}$ is lHsc (respectively, uHsc) on $CB(X_i)$  for every $i\in \mathbb{N}$ if and only if  $Z_Y$ is lHsc (respectively, uHsc) on $\mathcal{P}(X)$.
\end{theorem}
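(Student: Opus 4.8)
The plan is to reduce everything to coordinatewise statements via the identity $Z_Y(F)=\Pi_{i}Z_{Y_i}(F_i)$ supplied by \Cref{prop: rcp l_p} (the r.c.p.\ hypothesis guarantees each $Z_{Y_i}(F_i)$ is non-empty and that this product formula holds), and to control the infinite tail using \Cref{prop:RadiusEquality_p}. The first step is a purely metric lemma about products. Writing $e(A,B)=\sup_{a\in A}d(a,B)$ for the excess, the $\ell_p$-structure decouples nearest-point distances coordinatewise, so for $F=\Pi_i F_i$ and $G=\Pi_i G_i$ in $\mathcal{P}(X)$ one has $e(F,G)^p=\sum_i e(F_i,G_i)^p$ and hence, with $\mathcal{H}$ denoting the Hausdorff metric on each factor as well,
\[
\mathcal{H}_i(F_i,G_i)\le \mathcal{H}(F,G)\le \Big(\sum_{i}\mathcal{H}(F_i,G_i)^p\Big)^{1/p}\le 2^{1/p}\,\mathcal{H}(F,G).
\]
The same decoupling applied to the center sets gives $e(Z_Y(F),Z_Y(G))^p=\sum_i e(Z_{Y_i}(F_i),Z_{Y_i}(G_i))^p$, which is the quantity to be made small.

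For the forward direction I would treat the lHsc and uHsc cases simultaneously, as they are symmetric in $F$ and $G$. Fix $F\in\mathcal{P}(X)$ and $\epsilon>0$; I split the sum $\sum_i e(Z_{Y_i}(F_i),Z_{Y_i}(G_i))^p$ into a finite head $i\le N$ and a tail $i>N$. The tail is handled uniformly: for any $v_i\in Z_{Y_i}(F_i)$ one has $\|v_i\|\le rad_{Y_i}(F_i)+r(0,F_i)$, and choosing any $w_i\in Z_{Y_i}(G_i)$ gives
\[
e\big(Z_{Y_i}(F_i),Z_{Y_i}(G_i)\big)\le \big(rad_{Y_i}(F_i)+r(0,F_i)\big)+\big(rad_{Y_i}(G_i)+r(0,G_i)\big).
\]
By \Cref{prop:RadiusEquality_p} the sequences $(rad_{Y_i}(F_i))$ and $(r(0,F_i))$ are $p$-summable, and the two-sided estimate above shows that the $G$-tails $\sum_{i>N}rad_{Y_i}(G_i)^p$ and $\sum_{i>N}r(0,G_i)^p$ are controlled by the corresponding $F$-tails plus $2\,\mathcal{H}(F,G)^p$; hence $N$ can be chosen (depending only on $F,\epsilon$) making the tail contribution smaller than $(\epsilon/2)^p$ once $\mathcal{H}(F,G)$ is small. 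On the finite head, since $\mathcal{H}(F_i,G_i)\le\mathcal{H}(F,G)$, the lHsc (resp.\ uHsc) of each $Z_{Y_i}$ lets me pick $\delta>0$ so small that each of the finitely many excess terms is $<(\epsilon^p/2N)^{1/p}$. Summing the two parts yields $Z_Y(F)\subseteq Z_Y(G)+\epsilon B_X$ (resp.\ the reverse inclusion), which is the desired semicontinuity of $Z_Y$.

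For the converse I would use single-coordinate perturbations. Fix $j$ and a base set $F=\Pi_i F_i\in\mathcal{P}(X)$; given $G_j\in CB(X_j)$ with $\mathcal{H}(F_j,G_j)<\delta$, form $G=\Pi_i H_i$ with $H_j=G_j$ and $H_i=F_i$ for $i\ne j$. Since only the $j$-th factor changes, the excess formula gives $\mathcal{H}(F,G)=\mathcal{H}(F_j,G_j)<\delta$, and $G\in\mathcal{P}(X)\cap CB(X)$ because replacing one factor preserves $p$-summability of $(r(0,\cdot))$. Applying lHsc (resp.\ uHsc) of $Z_Y$ at $F$ with the $\delta$ it provides, and projecting the inclusion $Z_Y(F)\subseteq Z_Y(G)+\epsilon B_X$ onto the $j$-th coordinate (using $Z_Y=\Pi_i Z_{Y_i}$ and that any point of $Z_{Y_j}(F_j)$ extends to a point of $Z_Y(F)$ by filling the other slots from the non-empty $Z_{Y_i}(F_i)$), I obtain $Z_{Y_j}(F_j)\subseteq Z_{Y_j}(G_j)+\epsilon B_{X_j}$, i.e.\ lHsc (resp.\ uHsc) of $Z_{Y_j}$ at $F_j$; the reverse inclusion is obtained analogously.

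The main obstacle is the uniform control of the infinite tail in the forward direction: unlike the finite case, one cannot invoke coordinatewise semicontinuity on all coordinates at once, so the argument must instead exploit that points of $Z_{Y_i}(F_i)$ and $Z_{Y_i}(G_i)$ have norms bounded by $rad_{Y_i}+r(0,\cdot)$ together with the $p$-summability coming from \Cref{prop:RadiusEquality_p}. Establishing the two-sided Hausdorff estimate for products is the key preliminary that makes the tails of the perturbed data $G$ comparable to those of $F$; once it and the uniform norm bound are in hand, the remaining finite-head and projection steps are routine.
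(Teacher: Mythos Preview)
Your argument is correct and takes a genuinely different route from the paper's. The paper works sequentially: given $G_n\to F$ in $\mathcal{H}$ and $y_n\in Z_Y(F)$, it uses coordinatewise lHsc to produce $w_{n,i}\in Z_{Y_i}(G_{n,i})$ with $\|y_{n,i}-w_{n,i}\|\to 0$, and then shows $\|y_n-w_n\|\to 0$ by bounding tails---invoking \Cref{lem:unimini} for $(y_n)$ and a chain of estimates (comparing $rad_Y(G_n)$ to $rad_Y(F)$, picking a fixed $z\in F$ and a nearby $u_n\in G_n$) for $(w_n)$. You instead establish the product--excess identity $e(Z_Y(F),Z_Y(G))^p=\sum_i e(Z_{Y_i}(F_i),Z_{Y_i}(G_i))^p$ once, and then bound the tail of this series by the crude estimate $e(Z_{Y_i}(F_i),Z_{Y_i}(G_i))\le rad_{Y_i}(F_i)+r(0,F_i)+rad_{Y_i}(G_i)+r(0,G_i)$; Minkowski and the two--sided comparison $\sum_i\mathcal{H}(F_i,G_i)^p\le 2\,\mathcal{H}(F,G)^p$ then make the $G$--tails small whenever the $F$--tails are. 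This bypasses \Cref{lem:unimini} altogether, handles lHsc and uHsc symmetrically in one stroke, and packages the key step as a reusable lemma about products in $\ell_p$; the paper's proof, by contrast, is more hands--on and stays closer to the minimizing--sequence machinery already developed. For the converse both arguments coincide: the paper simply makes the specific choice $F_i=\{0\}$ for $i\ne j$, which is the cleanest instance of your ``fill the other slots'' step.
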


\begin{proof}
We present the proof for lHsc, the proof of uHsc follows in similar lines. Let $Z_{Y_i}$ be  lHsc on $CB(X_i)$ for every $i \in \mathbb{N}.$ Let $F= \Pi_{i \in \mathbb{N}} F_i \in \mathcal{P}(X)$,  $G_n = \Pi_{i \in \mathbb{N}} G_{n,i} \in \mathcal{P}(X)$ for every $n \in \mathbb{N}$ satisfying $\mathcal{H}(G_n,F) \to 0$ and $(y_n)$ be a sequence in $Z_Y(F)$ where $y_n= (y_{n,i})_{i \in \mathbb{N}}$. We need to prove that $d(y_n, Z_Y(G_n)) \to 0.$ Let $i \in \mathbb{N}$. Observe that $\mathcal{H}(G_{n,i},F_i) \leq \mathcal{H}(G_n, F)$ holds  for every $n \in \mathbb{N}$, hence $\mathcal{H}(G_{n,i},F_i) \to 0 $ as $n \to \infty$. Since, by \Cref{prop: rcp l_p}, $y_{n,i} \in Z_{Y_i}(F_i)$, it follows from the  assumption that there exists $w_{n,i} \in Z_{Y_i}(G_{n,i})$ for every $n \in \mathbb{N}$ such that $\|y_{n,i}-w_{n,i}\| \to 0$ as $n \to \infty$. Note that for every $n \in \mathbb{N}$, by \Cref{prop: rcp l_p}, $w_n \in Z_{Y}(G_n)$, where $w_n = (w_{n,i})_{i \in \mathbb{N}}.$  Now, it is enough to prove that $\|y_n-w_n\| \to 0$. For this, let $\epsilon>0$. Choose $j_0 \in \mathbb{N}$ such that,
\begin{itemize}
    \item $\sum_{i>j_0}\|y_{n,i}\|^p < \epsilon ^p$ for all $n \in \mathbb{N}$ (by \Cref{lem:unimini});
    
    \item $\sum_{i> j_0}rad_{Y_i}(F_i)^p< \frac{\epsilon^p}{2^{p+2}}$ (by \Cref{prop:RadiusEquality_p}) and 
    
    \item  $\sum_{i> j_0}\|z_i\|^p < \left( \frac{\epsilon}{4} \right)^p$, for some fixed $z=(z_i)_{i \in \mathbb{N}} \in F$
\end{itemize}
holds. Since $\mathcal{H}(G_n, F) \to 0$ and $\mathcal{H}(G_{n,i},F_i) \to 0$ for all $i \in \mathbb{N}$,  choose $n_0 \in \mathbb{N}$ such that for all $n \geq n_0,$ we have 

\begin{itemize}
     \item $|rad_Y(G_n)^p- rad_Y(F)^p| < \frac{\epsilon^p}{2^{p+2}}$; \hfill{$(*)$}
     
    \item $\mathcal{H}(G_n,F) < \frac{\epsilon}{8}$; \hfill{$(**)$}
    
    \item  $\sum_{i \leq j_0}| rad_{Y_i}(G_{n,i})^p-rad_{Y_i}(F_i)^p|< \frac{\epsilon^p}{2^{p+2}}$ and 
    
    \item $\sum_{i \leq j_0}\|y_{n,i}-w_{n,i}\|^p < (2\epsilon)^p$.
\end{itemize}
Let $n \geq n_0.$ 
Therefore, by $(*)$ and using  \Cref{prop:RadiusEquality_p},
\begin{align*}
\sum_{i> j_0}rad_{Y_i}(G_{n,i})^p &< \sum_{i \leq j_0} rad_{Y_i}(F_i)^p- \sum_{i \leq j_0} rad_{Y_i}(G_{n,i})^p+ \sum_{i > j_0} rad_{Y_i}(F_i)^p+ \frac{\epsilon^p}{2^{p+2}}\\
& <  \frac{\epsilon^p}{2^{p+2}}+  \frac{\epsilon^p}{2^{p+2}}+  \frac{\epsilon^p}{2^{p+2}}\\
& < \left(\frac{\epsilon}{2}\right)^p.
\end{align*}
Since, by $(**)$, $F \subseteq G_n+ \frac{\epsilon}{8}B_X$ holds, choose $u_n=(u_{n,i})_{i \in \mathbb{N}} \in G_n$ such that $\|u_n-z\| \leq \frac{\epsilon}{8}$. Thus, 
\[
\left(\sum_{i> j_0} \|u_{n,i}\|^p\right)^{\frac{1}{p}} \leq \left(\sum_{i> j_0} \|u_{n,i}-z_i\|^p\right)^{\frac{1}{p}}+\left(\sum_{i> j_0} \|z_{i}\|^p\right)^{\frac{1}{p}} < \|u_n-z\| + \frac{\epsilon}{4} < \frac{\epsilon}{2}.
\]
Therefore, it follows that
\begin{align*}
\left(\sum_{i>j_0}\|w_{n,i}\|^p \right)^{\frac{1}{p}} &\leq \left(\sum_{i> j_0} r(w_{n,i},G_{n,i})^p \right)^{\frac{1}{p}}+ \left(\sum_{i>j_0}\|u_{n,i}\|^p \right)^{\frac{1}{p}}\\
 & =  \left(\sum_{i> j_0} rad_{Y_i}(G_{n,i})^p \right)^{\frac{1}{p}}+ \left(\sum_{i>j_0}\|u_{n,i}\|^p \right)^{\frac{1}{p}}\\ 
 & < \frac{\epsilon}{2}+ \frac{\epsilon}{2}\\
 & = \epsilon.
\end{align*}
 Thus,
  \begin{align*}
     \|y_n-w_n\|^p &= \sum_{i \leq j_0}\|y_{n,i}-w_{n,i}\|^p+ \sum_{i> j_0}\|y_{n,i}-w_{n,i}\|^p\\
     & \leq  \sum_{i \leq j_0}\|y_{n,i}-w_{n,i}\|^p+ \left(\left(\sum_{i> j_0}\|y_{n,i}\|^p\right)^{\frac{1}{p}} + \left(\sum_{i> j_0}\|w_{n,i}\|^p\right)^{\frac{1}{p}}\right)^p\\
     & <  (2\epsilon)^{p}+(2\epsilon)^{p}.
  \end{align*}
  
   \noindent Hence, $\|y_n-w_n\| \to 0.$ This completes the proof of necessary part. 
   
   \par To prove the converse, let $j \in \mathbb{N}$ and $F_j \in CB(X_j).$ Consider $F= \Pi_{i \in \mathbb{N}} F_i \in \mathcal{P}(X)$, where $F_i=\{0\}$ for all $i \in \mathbb{N} {\setminus} \{j\}.$ Since $Z_Y$ is lHsc on $\mathcal{P}(X),$ there exists $\delta>0$ such that $Z_Y(F) \subseteq Z_Y(E) + \epsilon B_X$, for every $E \in \mathcal{P}(X)$ with $\mathcal{H}(F,E) < \delta.$  Let $G_j \in CB(X_j)$ such that $\mathcal{H}(F_j,G_j) < \delta.$ Now, we need to show that $Z_{Y_j}(F_j) \subseteq Z_{Y_j}(G_j)+ \epsilon B_{X_j}$. Consider $G=\Pi_{i \in \mathbb{N}} G_i \in \mathcal{P}(X)$, where $G_i=\{0\}$ for every $i \in \mathbb{N} {\setminus} \{j\}.$ Since $\mathcal{H}(F,G)= \mathcal{H}(F_j,G_j)$, by assumption, $Z_Y(F) \subseteq Z_Y(G)+ \epsilon B_X.$ Further, by \Cref{prop: rcp l_p}, $Z_{Y_j}(F_j) \subseteq Z_{Y_j}(G_j)+ \epsilon B_{X_j}.$ Thus, $Z_{Y_j}$ is lHsc on $CB(X_j)$. Hence the proof. 
\end{proof}

The following result on the stability of the continuity properties of the metric projection map is an immediate consequence of the preceding theorem. 

\begin{corollary}
Let $\{X_i: {i\in \mathbb{N}}\}$ be a collection of Banach spaces, $Y_i$ be a  proximinal subspace of $X_i$ for every $i\in \mathbb{N}$ and $1\leq p < \infty.$ Let $X=(\oplus_pX_i)_{i \in  \mathbb{N}}$ and $Y=(\oplus_pY_i)_{i \in  \mathbb{N}}$. Then $P_{Y_i}$ is lHsc (respectively, uHsc) on $X_i$  for every $i\in \mathbb{N}$ if and only if  $P_Y$ is lHsc (respectively, uHsc) on $X$.
\end{corollary}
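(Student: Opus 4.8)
The plan is to realize the metric projection as the restricted center map evaluated on singletons and then specialize \Cref{thrm: lHsc P_1}. For any $x_i \in X_i$ the singleton $\{x_i\}$ lies in $CB(X_i)$, and since $\mathcal{H}(\{a\},\{b\}) = \|a-b\|$, the assignment $x_i \mapsto \{x_i\}$ is an isometric embedding of $X_i$ into $(CB(X_i),\mathcal{H})$. Under this identification one has $Z_{Y_i}(\{x_i\}) = P_{Y_i}(x_i)$, so the lHsc (respectively, uHsc) of $P_{Y_i}$ on $X_i$ is precisely the lHsc (respectively, uHsc) of $Z_{Y_i}$ restricted to the singleton subsets of $CB(X_i)$. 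Likewise, for $x = (x_i)_{i \in \mathbb{N}} \in X$ the singleton $\{x\}$ equals $\Pi_{i \in \mathbb{N}}\{x_i\} \in \mathcal{P}(X)$, and $Z_Y(\{x\}) = P_Y(x)$; hence the continuity of $P_Y$ on $X$ is the continuity of $Z_Y$ on the singleton subsets inside $\mathcal{P}(X)$. Thus the corollary is exactly the restriction of \Cref{thrm: lHsc P_1} to singleton-valued sets.

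The one point requiring care is the hypothesis: \Cref{thrm: lHsc P_1} assumes that $(Y_i, CB(X_i))$ has r.c.p., whereas here we only assume proximinality of $Y_i$, which says that $(Y_i, \{x_i\})$ has r.c.p. for every singleton $\{x_i\}$. However, the proof of \Cref{thrm: lHsc P_1} invokes the r.c.p. hypothesis only to guarantee the existence of the restricted centers of the specific sets $F$ and $G_n$ (equivalently $G$) that appear in the argument. Since these sets are now products of singletons, proximinality of each $Y_i$ supplies exactly the needed existence via \Cref{prop: rcp l_p}. Moreover, the auxiliary ingredients of that proof, namely the radius formula of \Cref{prop:RadiusEquality_p}, the factorization $Z_Y(F) = \Pi_{i \in \mathbb{N}} Z_{Y_i}(F_i)$ from \Cref{prop: rcp l_p}, and the uniform tail estimate of \Cref{lem:unimini}, all remain valid for products of singletons.

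Consequently, I would run the proof of \Cref{thrm: lHsc P_1} with $F = \{x\}$ and $G_n = \{x_n\}$ (where $x_n \to x$ in $X$) for the forward implication, and with $F = \{(0,\dots,0,x_j,0,\dots)\}$ for the converse implication, obtaining the asserted equivalence for $P_{Y_i}$ and $P_Y$ without any new computation. I expect the only genuine obstacle to be the verification in the previous paragraph, namely confirming that no step of the ambient theorem tacitly requires r.c.p. for a non-singleton set; once this is checked, the corollary follows at once.
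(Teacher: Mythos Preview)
Your proposal is correct and matches the paper's own treatment, which simply declares the corollary an immediate consequence of \Cref{thrm: lHsc P_1} by restricting to singletons. Your explicit discussion of the hypothesis mismatch (r.c.p.\ for all of $CB(X_i)$ versus mere proximinality) is a point the paper glosses over, and your observation that the proof of \Cref{thrm: lHsc P_1} only invokes r.c.p.\ for the particular sets appearing---all of which are products of singletons in this specialization---is exactly the justification needed.
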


From \Cref{thm:pds_P1,thrm: P_1 lP_2 lHsc,thrm: lHsc P_1}, it is easy to verify the stability of property-$(lP_2),$ as shown in the result that follows. Further, as an outcome, we obtain the stability of locally uniform strong proximinality.

\begin{theorem}\label{thm:pds_lP1}
    Let $\{X_i: {i\in \mathbb{N}}\}$ be a collection of Banach spaces, $Y_i$ be a  subspace of $X_i$ for every $i\in \mathbb{N}$ and $1\leq p < \infty.$ Let $X=(\oplus_pX_i)_{i \in  \mathbb{N}}$ and  $Y=(\oplus_pY_i)_{i \in  \mathbb{N}}$. Then $(Y_i, CB(X_i))$ has property-$(lP_2)$ for every $i\in \mathbb{N}$ if and only if  $(Y, \mathcal{P}(X))$ has property-$(lP_2).$ 
\end{theorem}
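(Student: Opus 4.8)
The plan is to reduce Theorem \ref{thm:pds_lP1} to two results already established in the excerpt, namely the characterization of property-$(lP_2)$ in \Cref{thrm: P_1 lP_2 lHsc} and the stability results \Cref{thm:pds_P1} and \Cref{thrm: lHsc P_1}. The key observation is that by \Cref{thrm: P_1 lP_2 lHsc}, the pair $(V,\mathcal{F})$ has property-$(lP_2)$ if and only if it has property-$(P_1)$ \emph{and} the restricted center map $Z_V$ is lHsc on $(\mathcal{F},\mathcal{H})$. Thus property-$(lP_2)$ decomposes cleanly into a property-$(P_1)$ part and a lower-Hausdorff-semicontinuity part, each of which already has a corresponding stability theorem.

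First I would handle the forward direction. Assume $(Y_i, CB(X_i))$ has property-$(lP_2)$ for every $i \in \mathbb{N}$. By \Cref{thrm: P_1 lP_2 lHsc} applied on each coordinate, this is equivalent to saying that $(Y_i, CB(X_i))$ has property-$(P_1)$ and $Z_{Y_i}$ is lHsc on $CB(X_i)$, for every $i$. In particular each $(Y_i, CB(X_i))$ has r.c.p., which is the standing hypothesis needed to invoke \Cref{thrm: lHsc P_1}. Now I would apply \Cref{thm:pds_P1} to conclude that $(Y, F)$ has property-$(P_1)$ for every $F \in \mathcal{P}(X)$, i.e.\ $(Y, \mathcal{P}(X))$ has property-$(P_1)$; and I would apply \Cref{thrm: lHsc P_1} to conclude that $Z_Y$ is lHsc on $\mathcal{P}(X)$. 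A second application of \Cref{thrm: P_1 lP_2 lHsc}, this time on the product space with $\mathcal{F} = \mathcal{P}(X)$, then yields that $(Y, \mathcal{P}(X))$ has property-$(lP_2)$.

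The converse is entirely symmetric. Assuming $(Y, \mathcal{P}(X))$ has property-$(lP_2)$, \Cref{thrm: P_1 lP_2 lHsc} gives that $(Y, \mathcal{P}(X))$ has property-$(P_1)$ and $Z_Y$ is lHsc on $\mathcal{P}(X)$. The property-$(P_1)$ half, via the converse direction of \Cref{thm:pds_P1}, forces $(Y_i, F_i)$ to have property-$(P_1)$ for every $i$ and every relevant $F_i$, so in particular each $(Y_i, CB(X_i))$ has r.c.p.; the lHsc half, via the converse direction of \Cref{thrm: lHsc P_1}, forces $Z_{Y_i}$ to be lHsc on $CB(X_i)$ for every $i$. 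A final coordinatewise application of \Cref{thrm: P_1 lP_2 lHsc} then delivers property-$(lP_2)$ for each $(Y_i, CB(X_i))$.

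The one point requiring care, rather than a genuine obstacle, is verifying that the r.c.p.\ hypothesis of \Cref{thrm: lHsc P_1} is legitimately in force before that theorem is invoked. In the forward direction this is supplied by the property-$(P_1)$ assumption on each coordinate (since property-$(P_1)$ includes r.c.p.), and in the converse direction by the property-$(P_1)$ conclusion extracted from \Cref{thm:pds_P1}; so in both directions the r.c.p.\ condition is available exactly when \Cref{thrm: lHsc P_1} is applied. The stability of locally uniform strong proximinality then follows immediately by specializing $\mathcal{F}$ to the collection of singletons, recalling from the remark after the definition of property-$(lP_2)$ that $(Y, \mathcal{F})$ has property-$(lP_2)$ for singletons precisely when $Y$ is locally uniformly strongly proximinal.
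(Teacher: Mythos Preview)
Your proposal is correct and matches the paper's own argument exactly: the paper states just before this theorem that it follows from \Cref{thm:pds_P1,thrm: P_1 lP_2 lHsc,thrm: lHsc P_1}, which is precisely your decomposition of property-$(lP_2)$ into property-$(P_1)$ plus lHsc of the restricted center map, followed by the two separate stability results. Your care in checking the r.c.p.\ hypothesis of \Cref{thrm: lHsc P_1} is appropriate and handled correctly in both directions.
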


\begin{corollary}\label{cor:pds_lusp}
 Let $\{X_i: {i\in \mathbb{N}}\}$ be a collection of Banach spaces, $Y_i$ be a  subspace of $X_i$ for every $i\in \mathbb{N}$ and $1\leq p < \infty.$ Let $X=(\oplus_pX_i)_{i \in \mathbb{N}},$ and $Y=(\oplus_pY_i)_{i \in \mathbb{N}}.$ Then  $Y_i$ is locally uniformly strongly proximinal on $X_i$ for every $i\in \mathbb{N}$ if and only if $Y$ is locally uniformly strongly proximinal on $X$. 
\end{corollary}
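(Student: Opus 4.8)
The plan is to obtain \Cref{thm:pds_lP1} by combining three results already established in the excerpt, so that no new direct computation on $\ell_p$-sums is required. The key observation is the chain of equivalences furnished by \Cref{thrm: P_1 lP_2 lHsc}: for any $V \in CL(X)$ and $\mathcal{F} \subseteq CB(X)$, the pair $(V,\mathcal{F})$ has property-$(lP_2)$ if and only if it has property-$(P_1)$ \emph{and} the restricted center map $Z_V$ is lHsc on $(\mathcal{F},\mathcal{H})$. The strategy is therefore to apply this decomposition on both the factor spaces and the direct sum, and then transfer each of the two pieces separately using the corresponding factor-wise stability theorems.

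Concretely, I would argue as follows. Suppose first that $(Y_i, CB(X_i))$ has property-$(lP_2)$ for every $i \in \mathbb{N}$. By \Cref{thrm: P_1 lP_2 lHsc}, this is equivalent to saying that, for each $i$, the pair $(Y_i, CB(X_i))$ has property-$(P_1)$ and that $Z_{Y_i}$ is lHsc on $CB(X_i)$. For the property-$(P_1)$ part, I would invoke \Cref{thm:pds_P1}, which tells us that property-$(P_1)$ holding for every factor $(Y_i, F_i)$ is equivalent to property-$(P_1)$ holding for $(Y, F)$ for each $F = \Pi_{i} F_i \in \mathcal{P}(X)$; hence $(Y, \mathcal{P}(X))$ has property-$(P_1)$. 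For the lHsc part, I would invoke \Cref{thrm: lHsc P_1}, which gives exactly that $Z_{Y_i}$ being lHsc on $CB(X_i)$ for every $i$ is equivalent to $Z_Y$ being lHsc on $\mathcal{P}(X)$ (note that the r.c.p.\ hypothesis needed to apply \Cref{thrm: lHsc P_1} is supplied automatically, since property-$(lP_2)$ on each factor entails factor-wise r.c.p.). Reassembling via \Cref{thrm: P_1 lP_2 lHsc} applied now to the pair $(Y, \mathcal{P}(X))$, the conjunction of property-$(P_1)$ and lHsc of $Z_Y$ yields precisely property-$(lP_2)$ for $(Y, \mathcal{P}(X))$.

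The converse runs along the same lines, reading each equivalence in the opposite direction: property-$(lP_2)$ for $(Y, \mathcal{P}(X))$ gives, via \Cref{thrm: P_1 lP_2 lHsc}, both property-$(P_1)$ for $(Y, \mathcal{P}(X))$ and lHsc of $Z_Y$ on $\mathcal{P}(X)$; the former descends to each factor by \Cref{thm:pds_P1}, the latter by \Cref{thrm: lHsc P_1}, and \Cref{thrm: P_1 lP_2 lHsc} on each $(Y_i, CB(X_i))$ recombines them into property-$(lP_2)$ factor-wise. I do not anticipate a genuine obstacle here, since the whole argument is a bookkeeping exercise over previously proved equivalences; the only point requiring mild care is checking the exact hypotheses of the cited results, in particular that \Cref{thrm: lHsc P_1} is stated for factors with r.c.p.\ (which property-$(lP_2)$ guarantees) and that \Cref{thrm: P_1 lP_2 lHsc} is applied to the specific families $CB(X_i)$ and $\mathcal{P}(X)$ rather than to an arbitrary $\mathcal{F}$. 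The final \Cref{cor:pds_lusp} on locally uniform strong proximinality then follows by specializing $\mathcal{F}$ to singletons, using the remark after the definition of property-$(lP_2)$ that identifies it with local uniform strong proximinality in that case.
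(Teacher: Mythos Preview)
Your proposal is correct and follows essentially the same approach as the paper: the paper explicitly states that \Cref{thm:pds_lP1} is obtained by combining \Cref{thm:pds_P1}, \Cref{thrm: P_1 lP_2 lHsc}, and \Cref{thrm: lHsc P_1}, and that \Cref{cor:pds_lusp} is then an immediate outcome. The only small clarification worth making is that ``specializing $\mathcal{F}$ to singletons'' really means re-running the same three-step argument with the singleton families (equivalently, invoking the singleton corollaries of \Cref{thm:pds_P1}, \Cref{thrm: P_1 lP_2 lHsc}, and \Cref{thrm: lHsc P_1}) rather than deducing \Cref{cor:pds_lusp} formally from the statement of \Cref{thm:pds_lP1}, since the families in \Cref{thm:pds_lP1} are fixed as $CB(X_i)$ and $\mathcal{P}(X)$; your wording suggests you already have this in mind.
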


\section*{Declarations}
\textbf{Funding:} No funding was received for conducting this study.\\
	\textbf{Data availability:} Data sharing not applicable to this article.\\
		\textbf{Conflict of interest:} The authors have no conflict of interest to declare.

\end{document}